\documentclass[12pt,a4paper]{article}
\usepackage[pdftex]{graphicx}
\usepackage{float}
\pdfoutput=1 
\usepackage{booktabs,caption,fixltx2e}
\usepackage[utf8]{inputenc}
\usepackage{hyperref}
\usepackage{graphicx}
\graphicspath{ {./images/} }
\usepackage{amssymb}
\usepackage{amsmath}
\usepackage[mathscr]{euscript}
\usepackage{xcolor}
\usepackage{csquotes}
\usepackage[shortlabels]{enumitem}
\pagecolor{white}
\usepackage[all]{xy}
\usepackage{authblk}
\usepackage{theoremref}
\usepackage{tabularx}

\usepackage{textcomp}
\usepackage{lipsum}
\newcommand\blfootnote[1]{%
  \begingroup
  \renewcommand\thefootnote{}\footnote{#1}%
  \addtocounter{footnote}{-1}%
  \endgroup
}
\usepackage{array}
\usepackage{url}
\usepackage{mathtools}
\usepackage{csquotes}
\usepackage{hyperref}
\usepackage{amsmath,amscd}
\usepackage{authblk}
\usepackage{tikz}
\usepackage{amsthm}
\usepackage{tikz-cd}
\tikzcdset{scale cd/.style={every label/.append style={scale=#1},
    cells={nodes={scale=#1}}}}
\newsavebox{\tempbox}
\theoremstyle{definition}
\newtheorem*{definition}{Definition}
\newtheorem{theorem}{Theorem}[section]
\newtheorem{theo}{Theorem}
\newtheorem{lemma}[theorem]{Lemma}
\newtheorem{prop}[theorem]{Proposition}

\newtheorem{remark}[theorem]{Remark}
\newtheorem{remarkdf}[theorem]{Remark-Definition}

\setlength{\parindent}{20pt}

\title{\normalsize \textbf{HAEFLIGER'S APPROACH FOR SPHERICAL KNOTS MODULO IMMERSIONS}}
\date{}
 \author{\normalsize Neeti Gauniyal}
\begin{document}

\newcommand{\Address}{{
  \bigskip
  \small

 \textsc{Department of Mathematics, Kansas State University,
    128 Cardwell Hall, Manhattan, Kansas 66506, USA}\par\nopagebreak
  \textit{Email address}: \texttt{neetigl@ksu.edu}

}}

\maketitle 
\vspace{-0.7cm}

\begin{abstract}
We show that for the spaces of spherical embeddings modulo immersions $\overline{Emb}(S^n,S^{n+q})$ and long embeddings modulo immersions $\overline{Emb}_{\partial}(D^n,D^{n+q})$, the set of connected components is isomorphic to $\pi_{n+1}(SG,SG_q)$ for $q\geq 3$. As a consequence, we show that all the terms of the long exact sequence of the triad $(SG;SO,SG_q)$ have a geometric meaning relating to spherical embeddings and immersions.

\end{abstract}

\section*{Introduction}\blfootnote{The author has benefited from a guest position at the Max Planck Institute for Mathematics in Bonn. Her travel was also partially supported by her advisor V. Turchin's Simon Foundation grant, award ID: 519474.}
Let $Emb(S^n,S^{n+q})$ be the space of smooth embeddings $S^n\hookrightarrow S^{n+q}$ and $Imm(S^n,S^{n+q})$ be the space of smooth immersions $S^n\looparrowright S^{n+q}$. We define the space of \textit{spherical embeddings modulo immersions} $\overline{Emb}(S^n,S^{n+q})$ as the homotopy fiber of $Emb(S^n,S^{n+q})\hookrightarrow Imm(S^n,S^{n+q})$ over the trivial inclusion $id:S^n\subset S^{n+q}$. An element in this space is represented by a pair $(f,\alpha)$, where $f:S^n\hookrightarrow S^{n+q}$ is a smooth embedding together with a regular homotopy $\alpha:[0,1]\rightarrow Imm(S^n,S^{n+q})$, between $f$ and the trivial inclusion $S^n\subset S^{n+q}$. Moreover, $Emb_{\partial}(D^n, D^{n+q})$ denotes the space of disk embeddings $D^n\hookrightarrow D^{n+q}$ with the fixed behavior near the boundary, and we similarly define $\overline{Emb}_{\partial}(D^n,D^{n+q})$ as the space of disk embeddings modulo immersions. For framed spherical/disk embeddings, we consider the spaces 
\sloppy
$Emb^{fr}(S^n,S^{n+q}),\overline{Emb}^{fr}(S^n,S^{n+q}),Emb_{\partial}^{fr}(D^n,D^{n+q})$ and $\overline{Emb}_{\partial}^{fr}(D^n,D^{n+q})$ in the same manner. Throughout the paper for spherical embeddings we assume that the framing respects the natural orientation: if one takes the orientation of $S^n$ and completes it with the orientation of the normal bundle induced by the framing, one obtains the standard orientation of the ambient sphere $S^{n+q}$. For disk embeddings the framing is standard near the boundary. The spaces of embeddings modulo immersions recently attracted a lot of attention \cite{vt,vt1,bw,vt2,sak,vt3}. The main objective of this paper is to revise Haefliger's work \cite{hae}, and apply it to compute $\pi_0$ of these spaces.\\

In \cite[Theorems 3.4 and 5.7]{hae}, Haefliger has shown that for $q\geq 3$, the group of isotopy classes of (framed) spherical embeddings of $S^n$ in $S^{n+q}$ can be represented in terms of the homotopy group of a triad i.e., \sloppy $C_n^q:=\pi_{0}Emb(S^n,S^{n+q})=\pi_{n+1}(SG;SO,SG_q)$ and $FC_n^q:=\pi_{0}Emb^{fr}(S^n,S^{n+q})=\tilde{\pi}_{n+1}(SG;SO,SG_q)$. We recall these homotopy groups and isomorphisms later in \S \ref{sec1}.\\

\noindent
\textbf{Main results.} Let $\overline{FC}{}_n^q$ denote the group of isotopy classes of \enquote{framed disked embeddings}, which we discuss in more detail in \S \ref{sec2}.

\begin{theo}\thlabel{mth1}

\textit{For} $q\geq 3$,\\
$\pi_{0}\overline{Emb}(S^n,S^{n+q})=\pi_{0}\overline{Emb}_{\partial}(D^n,D^{n+q})=\pi_{0}\overline{Emb}(S^n,\mathbb{R}^{n+q})
=\pi_{0}\overline{Emb}^{fr}(S^n,S^{n+q})=\pi_{0}\overline{Emb}_{\partial}^{fr}(D^n,D^{n+q})=\pi_{0}\overline{Emb}^{fr}(S^n,\mathbb{R}^{n+q})
=\overline{FC}{}_n^q=\pi_{n+1}(SG,SG_q).$
\end{theo}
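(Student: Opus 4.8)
The plan is to view the entire chain as a single passage, under the identifications recalled in \S\ref{sec1}, from Haefliger's \emph{triad} group $\pi_{n+1}(SG;SO,SG_q)=\pi_0 Emb(S^n,S^{n+q})$ to the \emph{pair} group $\pi_{n+1}(SG,SG_q)$, the subspace $SO\subset SG$ being precisely the tangential (Smale--Hirsch) data recorded by an immersion. I would organize the seven identifications into three families: the comparison of $\overline{Emb}(S^n,S^{n+q})$ with the long exact sequence of the triad; the coincidence of the framed and unframed theories modulo immersions; and the coincidence of the spherical, disked, and Euclidean targets. The algebraic group $\overline{FC}{}_n^q$ of \S\ref{sec2} serves as the common value.

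First I would exploit the defining homotopy fibre sequence
\[ \overline{Emb}(S^n,S^{n+q}) \longrightarrow Emb(S^n,S^{n+q}) \xrightarrow{\ i\ } Imm(S^n,S^{n+q}) \]
and its exact sequence of homotopy groups
\[ \pi_1 Imm \xrightarrow{\ \delta\ } \pi_0\overline{Emb} \xrightarrow{\ p\ } \pi_0 Emb \xrightarrow{\ i_*\ } \pi_0 Imm. \]
By Smale--Hirsch, $Imm(S^n,S^{n+q})$ is weakly equivalent to the space of bundle monomorphisms $TS^n\to TS^{n+q}$, whose homotopy in the relevant range is governed by the orthogonal pair $(SO,SO_q)$, giving $\pi_0 Imm\cong\pi_n(SO,SO_q)$ (valid for $q\geq 2$) and $\pi_1 Imm\cong\pi_{n+1}(SO,SO_q)$ (valid for $q\geq 3$, by stability of the orthogonal groups). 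Combined with Haefliger's $\pi_0 Emb\cong\pi_{n+1}(SG;SO,SG_q)$, the crucial point to verify is that $i_*$ is carried onto the triad boundary $\partial$ and $\delta$ onto the first map in
\[ \pi_{n+1}(SO,SO_q) \to \pi_{n+1}(SG,SG_q) \xrightarrow{\ j\ } \pi_{n+1}(SG;SO,SG_q) \xrightarrow{\ \partial\ } \pi_n(SO,SO_q). \]
Matching these outer terms and their maps, and invoking the five lemma (using that the identifications of \S\ref{sec1} are natural and extend to the adjacent homotopy groups in the range $q\geq 3$), then identifies the remaining term: $\pi_0\overline{Emb}(S^n,S^{n+q})\cong\pi_{n+1}(SG,SG_q)$. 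The hypothesis $q\geq 3$ enters here both to guarantee that all the $\pi_0$'s are abelian groups and to make the stable identification of $\pi_1 Imm$ legitimate.

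Next I would dispose of the remaining flavours by soft arguments compatible with the above fibre sequences. For framings: a point of $\overline{Emb}$ carries a regular homotopy $\alpha$ from the embedding to the standard inclusion, and since the standard inclusion is canonically framed, transporting its framing along the normal bundle over $S^n\times[0,1]$ (determined up to homotopy by the component of $\alpha$) equips the embedding with a canonical normal framing; hence the forgetful map $\overline{Emb}^{fr}\to\overline{Emb}$ is a $\pi_0$-bijection, and likewise in the disked case. For the target: pushing a compact image off a point identifies $\overline{Emb}(S^n,\mathbb{R}^{n+q})$ with $\overline{Emb}(S^n,S^{n+q})$ on $\pi_0$, while the classical passage between closed and long embeddings (capping off, respectively removing, a standard disk) identifies $\overline{Emb}_{\partial}(D^n,D^{n+q})$ with $\overline{Emb}(S^n,S^{n+q})$; in each case the comparison respects the splitting into embedding and immersion data, so it descends to the fibres. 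Finally, $\pi_0\overline{Emb}^{fr}_{\partial}(D^n,D^{n+q})$ is by construction the group $\overline{FC}{}_n^q$ of \S\ref{sec2}, and the reworked Haefliger construction applied directly to the disked framed theory lands in $\pi_{n+1}(SG,SG_q)$ with the $SO$-factor absent, giving $\overline{FC}{}_n^q\cong\pi_{n+1}(SG,SG_q)$ and closing the chain.

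The main obstacle I expect is precisely the compatibility asserted in the second step: that under Haefliger's geometric identification the map induced by forgetting the embedding and remembering only the immersion is literally the triad boundary $\partial\colon\pi_{n+1}(SG;SO,SG_q)\to\pi_n(SO,SO_q)$, so that the homotopy fibre sequence of $\overline{Emb}\to Emb\to Imm$ is isomorphic to the triad long exact sequence. Establishing this requires unwinding Haefliger's construction to exhibit the $SO$-summand as the Gauss/tangential data of the immersion and checking naturality of all the intervening equivalences; the unstable range $q\geq 3$ must also be handled with care, since the identifications of $\pi_0 Imm$ and $\pi_1 Imm$ with the stable pair $(SO,SO_q)$ are only valid once the relevant connectivity is confirmed. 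Once this single compatibility is in place, the rest of the theorem follows formally from exactness together with the soft comparisons above.
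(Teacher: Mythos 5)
Your overall strategy for the key identification---deducing $\pi_0\overline{Emb}(S^n,S^{n+q})\cong\pi_{n+1}(SG,SG_q)$ by comparing the long exact sequence of the fibration $\overline{Emb}\to Emb\to Imm$ with the triad sequence $\pi_{n+1}(SO,SO_q)\to\pi_{n+1}(SG,SG_q)\to\pi_{n+1}(SG;SO,SG_q)\to\pi_n(SO,SO_q)$---has a genuine gap at its left end. The five lemma requires, first, a vertical map on the middle term making the ladder commute (your proposal never constructs a candidate map $\pi_0\overline{Emb}\to\pi_{n+1}(SG,SG_q)$), and second, control of the term one step to the left of $\pi_1 Imm(S^n,S^{n+q})$, namely $\pi_1 Emb(S^n,S^{n+q})$, together with a compatible surjection onto $\pi_{n+2}(SG;SO,SG_q)$. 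Haefliger's theorem identifies only $\pi_0 Emb$, so that term is simply not available, and knowing the outer four terms and the maps between them does not determine the middle term of an exact sequence (there is an extension problem for $0\to\mathrm{coker}\to\pi_0\overline{Emb}\to\ker\to 0$). Relatedly, your claim that $\pi_0\overline{Emb}^{fr}_{\partial}(D^n,D^{n+q})$ equals $\overline{FC}{}_n^q$ ``by construction'' is not correct: $\overline{FC}{}_n^q$ consists of concordance classes of disked embeddings $(f,\bar f)$ with $\bar f\colon D^{n+1}\hookrightarrow D^{n+N+1}$, not of pairs (embedding, path of immersions); identifying the two is exactly Theorem~\ref{cor} and requires an argument.

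The paper closes both gaps by stabilizing the ambient dimension. Since $Emb^{fr}_{\partial}(D^n,D^{n+\infty})\simeq Imm^{fr}_{\partial}(D^n,D^{n+\infty})\simeq\Omega^n SO$, the fiber sequence it compares (in the proof of Theorem~\ref{cor}) has a \emph{fully known} fiber $\Omega Emb^{fr}_{\partial}(D^n,D^{n+\infty})\simeq Emb^{fr}_{\partial}(D^{n+1},D^{n+1+\infty})$, so the comparison works at the space level; and the isomorphism $\xi\colon\overline{FC}{}_n^q\to\pi_{n+1}(SG,SG_q)$ is obtained by rerunning Haefliger's surgery argument with the Type~III boundary condition (Theorem~\ref{mt}), not by a formal diagram chase. (The paper's alternative five-lemma argument in diagram (\ref{five}) does work, but only because its window is flanked by $C_{n+1}^q$ and $Im_{n+1}^q$, i.e.\ by $\pi_0$'s of embedding and immersion spaces one dimension up, all identified by Haefliger---not by the unknown $\pi_1 Emb(S^n,S^{n+q})$; and it still presupposes that $\xi$ and the geometric maps have been constructed.) Your treatment of the remaining identifications (framed versus unframed, sphere versus $\mathbb{R}^{n+q}$ versus disk) matches Lemma~\ref{lmm} in spirit, though note that the closed-versus-long comparison for $\overline{Emb}$ is not the classical formality you describe and is quoted from \cite{vt3}.
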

\noindent
The result is an immediate corollary of Theorems \ref{mt} and \ref{cor}.\\

Alternatively, this result can be obtained using smoothing theory, as a consequence of \cite[\S 6]{hae1} and \cite[Theorem~1.1]{sak}. There is even a stronger result: 
\begin{equation*}\pi_i\overline{Emb}_{\partial}(D^n,D^{n+q})= \pi_{n+i+1}(SG_{n+q},SG_q) \text{   for  } i\leq2q-5,
\end{equation*} which follows from the work of Lashof \cite{las1}, Millett \cite[Theorem~2.3]{mill} and Sakai \cite[Theorem 1.1 and Remark 2.3]{sak}. Moreover, for $ i\leq q-3$, $ \pi_{n+i+1}(SG_{n+q},SG_q)=\pi_{n+i+1}(SG,SG_q)$, see Lemma~\ref{le}. However, our goal is to review Haefliger's construction and give a geometric meaning to $ \pi_{n+1}(SG,SG_q)$ i.e., $ \pi_{n+1}(SG,SG_q)=\overline{FC}{}_n^q$. \\

Another main result that does not immediately follow from smoothing theory is to geometrically interpret the long exact sequences associated with the triad $(SG;SO,SG_q)$ considered by Haefliger \cite[\S 4.4 and \S 5.9]{hae}. Let $Im_n^q$ and $FIm_n^q$ denote the group of regular homotopy classes of immersions $S^n\looparrowright S^{n+q}$ and framed immersions $S^n\looparrowright S^{n+q}$, respectively. It is natural to ask which (framed) spherical immersions can be realized as (framed) embeddings, or when two (framed) spherical embeddings are equivalent as (framed) immersions. Answers to these questions are encoded by the lower exact sequences in (\ref{*}) and (\ref{**}) of Theorem~\ref{mth2}, in which $\overline{FC}{}_n^q$ naturally fits.

\begin{theo}\thlabel{mth2}

\textit{For $q\geq 3$, the two long exact sequences of the triad $(SG;SO,SG_q)$ are isomorphic to the corresponding geometric long exact sequences:}
\small
 \begin{equation}\label{*}
\xymatrix@C=0.8em{
\ar[r] &   \pi_{n+1}(SG,SG_q) \ar[r] \ar@{=}[d]    &  \pi_{n+1}(SG;SO,SG_q) \ar[r] \ar@{=}[d]     &  \pi_n(SO,SO_q) \ar[r] \ar@{=}[d] & \pi_n(SG,SG_q) \ar[r] \ar@{=}[d]&\\
\ar[r]   &  \overline{FC}{}_n^q \ar[r]& C_n^q  \ar[r]& Im_n^q \ar[r] & \overline{FC}{}_{n-1}^q \ar[r] & 
}
\end{equation}

 \begin{equation}\label{**}
\xymatrix@C=1em{
\ar[r] &   \pi_{n+1}(SG,SG_q) \ar[r] \ar@{=}[d]    &  \tilde{\pi}_{n+1}(SG;SO,SG_q) \ar[r] \ar@{=}[d]     &  \pi_n(SO) \ar[r] \ar@{=}[d] & \pi_n(SG,SG_q) \ar[r] \ar@{=}[d]&\\
\ar[r]   &  \overline{FC}{}_n^q \ar[r]& FC_n^q  \ar[r]& FIm_n^q \ar[r] & \overline{FC}{}_{n-1}^q \ar[r] & 
}
\end{equation}
\normalsize
\end{theo}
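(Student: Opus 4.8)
The plan is to realize both displayed ladders as commutative diagrams whose top rows are the standard algebraic long exact sequences of the triad $(SG;SO,SG_q)$, with intersection $SO\cap SG_q=SO_q$, and of its framed analogue, and then to install the vertical arrows as isomorphisms. Indeed, the top row of (\ref{*}) is literally the triad sequence
\[
\cdots \to \pi_{n+1}(SG,SG_q)\to \pi_{n+1}(SG;SO,SG_q)\xrightarrow{\partial}\pi_n(SO,SO_q)\to \pi_n(SG,SG_q)\to\cdots,
\]
and (\ref{**}) is Haefliger's framed variant in which $\pi_n(SO,SO_q)$ is replaced by $\pi_n(SO)$. Since the top rows are exact, once the verticals are isomorphisms and every square commutes, exactness of the bottom (geometric) rows is inherited automatically. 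The entire content of the theorem is therefore (i) producing the four kinds of vertical isomorphisms and (ii) verifying commutativity.

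Three of the four identifications are already available. The isomorphism $\pi_{n+1}(SG,SG_q)=\overline{FC}{}_n^q$ is Theorem \ref{mth1}, i.e. Theorems \ref{mt} and \ref{cor}; reindexing $n\mapsto n-1$ gives $\pi_n(SG,SG_q)=\overline{FC}{}_{n-1}^q$, which is the source of the dimension shift in the last term. The identifications $\pi_{n+1}(SG;SO,SG_q)=C_n^q$ and $\tilde{\pi}_{n+1}(SG;SO,SG_q)=FC_n^q$ are Haefliger's Theorems 3.4 and 5.7 \cite{hae}, recalled in \S\ref{sec1}. It remains to identify the immersion terms: by Smale--Hirsch immersion theory a regular homotopy class of immersions $S^n\looparrowright \mathbb{R}^{n+q}$ (equivalently into $S^{n+q}$, since $S^n$ is compact and $n<n+q$) is a homotopy class of bundle monomorphisms $TS^n\to \mathbb{R}^{n+q}$, so that $Im_n^q\cong\pi_n(V_{n+q,n})\cong\pi_n(SO_{n+q},SO_q)$, which for $q\geq 3$ lies in the stable range and equals $\pi_n(SO,SO_q)$ (cf. Lemma \ref{le}); recording in addition a normal framing completes the $n$-frame to a full $(n+q)$-frame and yields $FIm_n^q\cong\pi_n(SO_{n+q})=\pi_n(SO)$.

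The crux is commutativity, for which the geometric bottom maps must be matched with the algebraic ones. The map $\overline{FC}{}_n^q\to C_n^q$ is the forgetful map induced by $\overline{Emb}\to Emb$ (drop the regular homotopy to the trivial immersion), and $C_n^q\to Im_n^q$ is induced by the inclusion $Emb\hookrightarrow Imm$. I would verify that under the identifications above these become, respectively, the map $\pi_{n+1}(SG,SG_q)\to\pi_{n+1}(SG;SO,SG_q)$ and the triad boundary $\partial\colon \pi_{n+1}(SG;SO,SG_q)\to\pi_n(SO,SO_q)$. This forces one to reopen Haefliger's geometry-to-homotopy dictionary and track how his geometric representatives---an embedding together with its tangential and normal framing data mapping into the triad---transform when one forgets the isotopy structure, respectively the embedding structure. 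In essence this is a naturality statement for Haefliger's constructions with respect to the two forgetful maps, and I expect it to be the main obstacle, since those isomorphisms are constructed geometrically and the compatibility is not formal.

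Once the two forgetful squares commute, the square for the connecting map $Im_n^q\to\overline{FC}{}_{n-1}^q$ is essentially forced: under the verticals it must correspond to the map $\pi_n(SO,SO_q)\to\pi_n(SG,SG_q)$, and this square commutes by naturality of the triad boundary with respect to the map of pairs $(SO,SO_q)\to(SG,SG_q)$ induced by $SO\hookrightarrow SG$ and $SO_q\hookrightarrow SG_q$. The framed sequence (\ref{**}) is handled in exact parallel: the normal framing trivializes the $SO_q$-ambiguity, replacing $\pi_n(SO,SO_q)$ by $\pi_n(SO)$ and the terms $C_n^q$, $Im_n^q$ by $FC_n^q$, $FIm_n^q$; the same forgetful maps together with Haefliger's framed Theorem 5.7 produce the corresponding commutative ladder. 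Exactness of both bottom rows is then inherited from the exactness of the top rows through the vertical isomorphisms, completing the argument.
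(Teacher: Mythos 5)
Your overall architecture coincides with the paper's: assemble the ladder with the known vertical isomorphisms ($\xi$ from Theorems \ref{mt} and \ref{cor}, Haefliger's $\psi$ and $\tilde{\psi}$, and an identification of the immersion term), check that every square commutes, and let exactness of the bottom rows be inherited through the isomorphisms. The gap is in how you dispose of the commutativity checks, which is where the entire content of the theorem lives. You correctly flag the two forgetful squares as requiring a re-examination of Haefliger's dictionary but do not carry this out; in the paper the first square is immediate because a special disked embedding with Type~III framing is literally also one with Type~I framing, so $\overline{FC}{}_n^q\to C_n^q$ is an inclusion on representatives, and the second square is quoted from \cite[\S 4.4]{hae} together with the observation (Remark \ref{rmk}) that $\eta$ depends only on the framing over $D_{-}^n$, mirroring the restriction map $\pi_{n+1}(SG;SO,SG_q)\to\pi_n(SO,SO_q)$.

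More seriously, your claim that the last square is \enquote{essentially forced} and \enquote{commutes by naturality of the triad boundary with respect to the map of pairs $(SO,SO_q)\to(SG,SG_q)$} is not correct. Naturality identifies the top-row map $\pi_n(SO,SO_q)\to\pi_n(SG,SG_q)$, but it says nothing about whether the geometric map $\theta:Im_n^q\to\overline{FC}{}_{n-1}^q$ (restrict a special disked immersion to $D^n_{+}$ and replace the immersed disk by an embedded one in high codimension) corresponds to it under $\eta$ and $\xi$; and commutativity of two squares in a ladder never forces the third. This is precisely the square on which the paper spends its effort: one uses Remark \ref{rem} to re-represent $\theta(\alpha)$ by the trivial disked embedding $(id,id)$ carrying the framing $\bar{f}|_{D_{-}^n}$, computes $\xi$ of this representative explicitly as the linear map $(x,y)\mapsto r(x)(y)$ for the rotation $r:D^n\to SO(N)$ determined by that framing, and observes that this is exactly the representative of $\eta(\alpha)$ viewed in $\pi_n(SG,SG_q)$. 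Without this (or an equivalent) computation the exactness of the bottom rows does not follow. A smaller point: you identify $Im_n^q$ with $\pi_n(SO,SO_q)$ via Smale--Hirsch, whereas the squares must be checked against Haefliger's specific $\eta$, defined by the $(N-q)$-frame field of a Type~I framed disked immersion; if you prefer your identification you must additionally check that it agrees with his.
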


Note that the upper sequences in (\ref{*}) and (\ref{**}) are the long exact sequences of the homotopy groups of pairs $(SG/SG_q,SO/SO_q)$ and $(SG/SG_q, SO)$, respectively, see Remarks \ref{remar} and \ref{remark}.\\

The paper is organized as follows: we give a quick review of Haefliger's result \cite{hae} for (framed) spherical embeddings $S^n\hookrightarrow S^{n+q}$ in \S \ref{sec1}. In \S \ref{sec2}, we define the group $\overline{FC}_n^q$ and show that $\overline{FC}{}_n^q= \pi_{n+1}(SG,SG_q)$. We prove Theorems~\ref{mth1}~and~\ref{mth2} in \S \ref{sec3} and \S \ref{sec4}, respectively. We recall some computations and prove a few applications of Theorem~\ref{mth1} in \S \ref{sec5}. Throughout the paper we work in the smooth category and assume $q\geq 3$. \\

\noindent
\textbf{Terminology:}
Let $D^n$ be the standard unit disk in $\mathbb{R}^n$, and $\{e_1,..,e_n\}$ denote the natural basis of $\mathbb{R}^n$. Let $S^n=\partial D^{n+1}$ be the unit sphere such that $S^n=D_{-}^n\cup D_{+}^n$ with $D_{-}^n=\{x\in S^n | x_1\leq 0\}$ and $D_{+}^n=\{x\in S^n | x_1\geq 0\}$. According to Haefliger \cite{hae}, the \textit{suspension} of a map $f:D^n\rightarrow D^n$ is given by the map $S(f):D^{n+1}\rightarrow D^{n+1}$ sending the arc of circle going from $e_{n+1}$, by $x\in D^n$, to $-e_{n+1}$ on the arc of circle from $e_{n+1}$, by $f(x)$, to $-e_{n+1}$. The suspension $S^{n+1}\rightarrow S^{n+1}$ of a map $S^n\rightarrow S^n$ is defined in the same way.\\

Abusing terminology, the suspension of an embedding $S^n\xhookrightarrow{f} S^{n+q}$ is the composition $S^n\xhookrightarrow{f} S^{n+q}\subset S^{n+q+1}$. For the suspension of a framed embedding $S^n\hookrightarrow S^{n+q}$, the framing is completed by adding the standard vector $e_{n+q+2}$ as the last vector. We often say suspension for an iterated suspension defined inductively. For example, when we say $S^n\hookrightarrow S^{n+N}$ is the suspension of a framed embedding $S^n\xhookrightarrow{f} S^{n+q}$ for $N>q$, we mean that it is defined as the composition $S^n\xhookrightarrow{f} S^{n+q}\subset S^{n+N}$ and the framing is obtained by adding vectors $\{e_{n+q+2}, \ldots, e_{n+N+1}\}$ to the initial framing. We define the suspension of a (framed) disk embedding similarly. \\

\noindent
\textbf{Acknowledgment:}
The author would like to thank her advisor Victor Turchin for his time and helpful feedback on innumerable drafts of this paper.

\section{Embeddings of $S^n$ in $S^{n+q}$}\label{sec1}

Haefliger \cite{hae} proved that the group of concordance classes of embeddings of $S^n$ in $S^{n+q}$ is isomorphic to $\pi_{n+1}(SG;SO,SG_q)$ for $q\geq 3$. 
\subsection{The group $C_n^q$}
\begin{center}
$C_n^q:=\{$concordance classes of smooth embeddings $S^n\hookrightarrow S^{n+q}\}.$  
 \end{center}

\begin{theorem} \cite[Theorem 1.2]{hae}.
\textit{Two concordant embeddings of $S^n$ in $S^{n+q}$ are isotopic when $q\geq3$, i.e. $C_n^q=\pi_{0}Emb(S^n,S^{n+q})$, the set of connected components of the space of embeddings of $S^n$ in $S^{n+q}$.}
\end{theorem}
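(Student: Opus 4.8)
The plan is to prove that concordance implies isotopy for embeddings of $S^n$ in $S^{n+q}$ when $q\geq 3$, following the codimension assumption that keeps us in the metastable or better range. The statement is Haefliger's, and the natural route is to upgrade a concordance into an ambient isotopy. Recall that a concordance between embeddings $f_0,f_1\colon S^n\hookrightarrow S^{n+q}$ is a proper embedding $F\colon S^n\times[0,1]\hookrightarrow S^{n+q}\times[0,1]$ that restricts to $f_i$ at the two ends and is level-preserving near the boundary. The goal is to produce an ambient isotopy $H_t$ of $S^{n+q}$ with $H_0=\mathrm{id}$ and $H_1\circ f_0=f_1$.

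First I would set up the problem by considering the concordance $F$ as giving an embedded submanifold $W=F(S^n\times[0,1])$ of $S^{n+q}\times[0,1]$, and I would aim to show that $W$ is \emph{unknotted} in the product in the sense that it can be straightened to a product embedding $f_0\times\mathrm{id}$ after an ambient isotopy. The key technical input is that in codimension $q\geq 3$ one has enough room to apply general position and the Whitney trick analogues, or more directly, Hudson's concordance-implies-isotopy theorem in the piecewise-linear or smooth category, which requires precisely $q\geq 3$. Concretely, I would construct a vector field on $S^{n+q}\times[0,1]$ that is transverse to the levels and tangent to $W$, so that integrating it sweeps out the desired ambient isotopy; the obstruction to finding such a field globally is where the codimension hypothesis enters, since the normal bundle data and the general-position intersections must vanish, and these obstructions live in dimensions controlled by $2(n+1)-(n+q+1)=n+1-q$, which is where $q\geq 3$ buys the needed gap.

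The main steps, in order, are: (i) reinterpret the concordance as a submanifold $W$ of the product and arrange by general position that $W$ meets the boundary levels cleanly; (ii) invoke the relevant concordance-implies-isotopy machinery (Hudson's theorem, or Haefliger's own engulfing/Whitney-trick argument) valid in codimension $q\geq 3$ to straighten $W$ into a product; (iii) extract from this straightening the ambient isotopy of $S^{n+q}$ carrying $f_0$ to $f_1$; and (iv) conclude that the map $C_n^q\to\pi_0 Emb(S^n,S^{n+q})$ sending a concordance class to its path component is a bijection. The surjectivity is immediate since isotopic embeddings are concordant, so the content is the injectivity, which is exactly the straightening argument.

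The hard part will be step (ii): establishing the straightening of the concordance into a product in a way that genuinely uses $q\geq 3$ and not more. In low codimension the Whitney trick fails and the theorem is false, so the proof must isolate precisely where the dimension count $q\geq 3$ is exploited—typically in ensuring that the relevant general-position intersections are empty and that an engulfing or handle-straightening argument applies without obstruction. Rather than reproduce the full technical apparatus, I would cite Haefliger's original argument or Hudson's concordance-implies-isotopy result directly, since this statement is quoted in the excerpt as \cite[Theorem 1.2]{hae} and the paper's purpose is to \emph{use} it rather than reprove it; the expected proof here is therefore a short appeal to Haefliger's Theorem 1.2 with the observation that $q\geq 3$ is the hypothesis under which his codimension-$\geq 3$ unknotting and straightening techniques apply.
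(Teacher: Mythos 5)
Your proposal is correct and lands where the paper does: the statement is quoted as Haefliger's Theorem 1.2 and the paper gives no proof of its own, so the expected argument is exactly the citation you make in your final paragraph, with the codimension hypothesis $q\geq 3$ being what makes the concordance-implies-isotopy machinery (Haefliger's straightening, or equivalently Hudson's theorem) applicable. Your sketch of upgrading the concordance $W\subset S^{n+q}\times[0,1]$ to an ambient isotopy is a faithful gloss on what that cited result does, so there is nothing to add.
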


Furthermore, the equality $\pi_{0}Emb(S^n,S^{n+q})= \pi_{0}Emb_{\partial}(D^n, D^{n+q})$ enables $C_n^q$ with an additive multiplication, and the existence of inverses is guaranteed, as we consider concordance classes. Hence, $C_n^q$ is an abelian group.
\begin{lemma}\cite[\S 1]{hae}.\thlabel{slice}
\textit{An embedding $S^n\hookrightarrow S^{n+q}$ is concordant to the trivial one if and only if it is slice, in other words, if it can be extended to an embedding $D^{n+1}\hookrightarrow D^{n+q+1}$.}
\end{lemma}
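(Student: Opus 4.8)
The plan is to prove both implications by the same elementary device, comparing a proper disk-embedding with the complement of a small round ball. Recall that two embeddings $f_0,f_1:S^n\hookrightarrow S^{n+q}$ are \emph{concordant} if there is an embedding $F:S^n\times[0,1]\hookrightarrow S^{n+q}\times[0,1]$ that is level-preserving near the two ends and restricts to $f_i$ on $S^n\times\{i\}$; being \emph{slice} means $f$ extends to a proper embedding $g:D^{n+1}\hookrightarrow D^{n+q+1}$ of the bounded disk, carrying $\partial D^{n+1}=S^n$ to $\partial D^{n+q+1}=S^{n+q}$ by $f$. The two conditions will turn out to be two views of the same configuration.

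For the direction \enquote{slice $\Rightarrow$ concordant to the trivial embedding}, I would start from a slice disk $g:D^{n+1}\hookrightarrow D^{n+q+1}$ and choose an interior point $p\in g(\operatorname{int}D^{n+1})$. By the local flatness of smooth embeddings (the tubular neighborhood theorem), I can find a small closed ball $B\subset\operatorname{int}D^{n+q+1}$ about $p$ for which the pair $(B,B\cap g(D^{n+1}))$ is diffeomorphic to the standard disk pair $(D^{n+q+1},D^{n+1})$. Removing the interior of $B$ leaves a shell $D^{n+q+1}\setminus\operatorname{int}B$ which is diffeomorphic to $S^{n+q}\times[0,1]$, with the outer boundary $\partial D^{n+q+1}$ at one end and $\partial B$ at the other. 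Under this identification $g(D^{n+1})\setminus\operatorname{int}B$ is an embedded copy of $S^n\times[0,1]$ that equals $f$ on the outer end and, because $B$ was chosen standard, equals the trivial inclusion on the inner end. This is precisely a concordance from $f$ to the trivial embedding.

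For the converse, \enquote{concordant to trivial $\Rightarrow$ slice}, I would run the same picture in reverse. Given a concordance $F:S^n\times[0,1]\hookrightarrow S^{n+q}\times[0,1]$ from $f$ at level $0$ to the trivial inclusion $i_0$ at level $1$, I would cap the cylinder $S^{n+q}\times[0,1]$ by gluing a disk $D^{n+q+1}$ along $S^{n+q}\times\{1\}$; since a collar union a disk is again a disk, the result is diffeomorphic to $D^{n+q+1}$ with boundary $S^{n+q}\times\{0\}$. Inside it, the union of $F(S^n\times[0,1])$ with the trivial disk $D^{n+1}\subset D^{n+q+1}$ capping $i_0(S^n)$ is a properly embedded disk (again a collar union a disk) whose boundary is $f(S^n)$. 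Hence $f$ is slice.

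Both directions are geometrically transparent, so I expect the only real work to be technical. The hard part will be the smooth bookkeeping: (i) producing the standard ball $B$ in the first direction, which rests on the local standardness of smooth codimension-$q$ embeddings near an interior point, and (ii) rounding the corners that appear along the gluing loci $\partial B$ and $S^{n+q}\times\{1\}$, so that the objects constructed are genuine smooth embeddings rather than merely piecewise-smooth ones. Both are handled by routine collar and corner-smoothing arguments, and neither imposes any restriction on $q$.
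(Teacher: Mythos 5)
Your argument is correct and is essentially the same as the one the paper relies on: the paper does not prove this lemma itself but cites Haefliger [\S 1], whose argument is exactly your ball-removal/capping construction (delete a standard ball pair around an interior point of the slice disk to exhibit the concordance through the resulting shell $S^{n+q}\times[0,1]$, and conversely cap the concordance cylinder with the trivial disk pair). The technical points you flag --- local standardness of the smooth disk pair near an interior point, the product structure on the complement of a small round ball, and corner rounding along the gluing loci --- are indeed the only issues, and they are handled by the routine collar and isotopy-extension arguments you indicate.
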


\subsection{The group $\pi_{n+1}(SG;SO,SG_q)$}\label{triad}
Let $SG_q$ be the space of degree one maps $S^{q-1}\rightarrow S^{q-1}$, $SG=\cup SG_q$ under suspension, and $SO=\cup SO_q$, where $SO_q$ is the special orthogonal group.\\

An element in $\pi_{n+1}(SG;SO,SG_q)$ is represented by a continuous based map $\phi:D^{n+1}\rightarrow SG$ i.e., for $x\in D^{n+1}$, $\phi(x):S^{N-1}\rightarrow S^{N-1}$, for some large $N$, such that $\phi(D_{-}^n)\subset SO_N$ and $\phi(D_{+}^n)\subset SG_q$. Note that the equator $S^{n-1}=\partial D_{-}^n=\partial D_{+}^n$ goes to $SO\cap SG_q=SO_q$, and $\phi(*)=id$ for the base-point $*=e_2\in S^{n-1}$.\footnote{Haefliger in \cite{hae} does not consider the base-point condition, but it is immediate that adding it yields the same homotopy group, since $SO_q=SO\cap SG_q$ is connected.} Abusing notation, we also view $\phi$ as a map $\phi:D^{n+1}\times S^{N-1}\rightarrow S^{N-1}$, and sometimes for $\phi(x)$ we write $\phi_x=\phi(x,-):S^{N-1}\rightarrow S^{N-1}$.\\

Two such maps $\phi:D^{n+1}\times S^{N-1}\rightarrow S^{N-1}$ and $\phi':D^{n+1}\times S^{N'-1}\rightarrow S^{N'-1}$ represent the same element in $\pi_{n+1}(SG;SO,SG_q)$ if there is a homotopy $\phi_t:D^{n+1}\times S^{M-1}\rightarrow S^{M-1}$ for some $M\geq N, N'$ and $t\in [0,1]$, satisfying the above conditions and such that for any $x\in D^{n+1}$, the maps $\phi_0(x,-),\phi_1(x,-):S^{M-1}\rightarrow S^{M-1}$ are suspensions of $\phi(x,-)$ and $\phi'(x,-)$, respectively. The product operation of any two elements in $\pi_{n+1}(SG;SO,SG_q)$ is defined point-wise.

\setbox\tempbox=\hbox{\begin{tikzcd}[scale cd=0.8]
BSO_q \arrow[r] \arrow[d]& BSG_q \arrow[d] \\
BSO \arrow[r]  & BSG    \text{    .}\end{tikzcd}} 
\begin{remark}\label{remar} Recall that the upper long exact sequence in (\ref{*}) is the long exact sequence of the pair $(SG/SG_q,SO/SO_q)$. Indeed, Milgram \cite[\S 1]{mil} interpreted the group $\pi_{n+1}(SG;SO,SG_q)$ as $\pi_n$\Big(hofib$(SO/SO_q\rightarrow SG/SG_q)\simeq$ hofib$(SG_q/SO_q\rightarrow SG/SO)$\Big).\footnote{The spaces are equivalent because they describe the total homotopy fiber of the square \[\box\tempbox\]} One way to see this interpretation is that the group $\pi_{n+1}(SG;SO,SG_q)$ is obviously isomorphic to $\pi_{n}(SO\times_{SG}^{h} SG_q,SO_q)$, where $SO\times_{SG}^{h} SG_q$ is the homotopy pullback of $SO\rightarrow SG\leftarrow SG_q$. 

\end{remark}

\subsection{The isomorphism $\psi:C_n^q\rightarrow \pi_{n+1}(SG;SO,SG_q)$}\label{sub}

Although these two groups look completely different, there is a natural map between them. To see the relation, Haefliger considers representatives in $C_n^q$ to be framed embeddings of $D^{n+1}$ with different boundary conditions on $D^n_{-}\subset S^n=\partial D^{n+1}$ and $D^n_{+}\subset S^n=\partial D^{n+1}$. Framing will be crucial to relate such embeddings to $\pi_{n+1}(SO;SO,SG_q)$ by means of Pontryagin-Thom type construction \cite[\S 3]{hae}.\\

Given an embedding $f:S^n\hookrightarrow S^{n+q}$, we say $f$ is a \textbf{\textit{special embedding}} if $f|_{D_{-}^{n}}=id$ and $f$(int $D_{+}^n)\subset$ int $D_{+}^{n+q}$. 
We can always extend $f:S^n\hookrightarrow S^{n+q}$ to a disk embedding $\bar{f}:D^{n+1}\hookrightarrow D^{n+N+1}$, for some $N$ large enough (in fact $N> n+2$). We refer the obtained pair $(f,\bar{f}):(S^n,D^{n+1})\hookrightarrow (S^{n+q},D^{n+N+1})$ as a \textbf{\textit{disked embedding}}. Any element in $C_n^q$ can be represented by a special disked emedding $(f,\bar{f})$ together with some framing on $\bar{f}$ defined as follows:
\begin{itemize}
\item Fix the base-point $*=e_2\in S^{n-1}=D_{-}^n\cap D_{+}^n$  and endow it with the framing $\{e_{n+2},\ldots,e_{n+q+1}\}$.
\item Extend the framing from $*=e_2$ to $D_{+}^n$ inside $D_{+}^{n+q}$. Since $*\hookrightarrow D^n_{+}$ is a homotopy equivalence, this extension is unique up to homotopy. Take the suspension of this framing in $D^{n+N+1}$ by adding $\{e_{n+q+2},\ldots,e_{n+N+1}\}$ as last vectors.
\item Extend the obtained framing from $D_{+}^n$ to the entire disk $D^{n+1}$ inside~$D^{n+N+1}$. Again this framing is defined uniquely up to homotopy.\\
\end{itemize}

Note that even though the knot $f$ is trivial on $D_{-}^n$, the extended framing can be non-trivial. Moreover, the framing on $\bar{f}|_{D_{-}^n}$ inside $D^{n+N+1}$ might not be a suspension, while the framing on $\bar{f}|_{D^n_{+}}$ inside $D^{n+N+1}$ is the suspension of a framing inside $D^{n+q}_{+}$. We refer this boundary condition on the framing defined on $\bar{f}$ as \textbf{\textit{Type~I}} (in sections \ref{subsec1} and \ref{sec2}, we will also consider framing with Type~II and Type~III boundary conditions). Hence, any embedding $f:S^n\hookrightarrow S^{n+q}$ representing an element in $C^q_n$ can be considered as a \textbf{\textit{special disked embedding $(f,\bar{f}):(S^n,D^{n+1})\hookrightarrow (S^{n+q},D^{n+N+1})$ with Type~I framing}}, i.e., $\bar{f}|_{S^n=\partial D^{n+1}}=f$ is a special knot, and the framing on $\bar{f}$ has boundary condition defined as above.\\

Any two special disked embeddings $(f_0,\bar{f}_{0}):(S^n,D^{n+1})\hookrightarrow (S^{n+q},D^{n+N_{0}+1})$ and $(f_1,\bar{f}_{1}):(S^n,D^{n+1})\hookrightarrow (S^{n+q},D^{n+N_{1}+1})$ with Type~I framing are \textit{concordant} if there exists an embedding $F:D^{n+1}\times [0,1]\hookrightarrow D^{n+N+1}\times [0,1]$ for $N\geq max\{N_{0},N_{1}\}$, such that $F|_{D^{n+1}\times i}=\bar{f}_i$ for $i=0,1$, $F|_{D_{-}^n\times [0,1]}=id$ and $F|_{D_{+}^n\times [0,1]}\subset D_{+}^{n+q}\times [0,1]$. Furthermore, the framing on $F|_{D_{+}^n\times [0,1]}$ and $F|_{D^{n+1}\times i}$, $i=0,1$, is given by suspension of a framing inside $D_{+}^{n+q}\times [0,1]$ and $D^{n+N_{i}+1}\times i$,  respectively. Similarly, we define the isotopy relation to be a level-preserving concordance. All the following groups are isomorphic for $q\geq 3$:
\begin{center}

  $C^q_n=$\{concordance/isotopy classes of embeddings $S^n\hookrightarrow S^{n+q}$\}\\
   $\updownarrow$\\
  
     \{concordance/isotopy classes of special embeddings $S^n\hookrightarrow S^{n+q}$\}\\
           $\updownarrow$\\
     
  \{concordance/isotopy classes of special disked embeddings $(S^n,D^{n+1})\hookrightarrow (S^{n+q},D^{n+N+1})$\}\\
     $\updownarrow$\\
     
  \{concordance/isotopy classes of special disked embeddings $(S^n,D^{n+1})\hookrightarrow (S^{n+q},D^{n+N+1})$ with Type~I framing\}\\
  
  \end{center}
  
\vspace{0.2cm}  

Furthermore, as a consequence of the tubular neighborhood theorem, one can choose a representative $f$ in $C_n^q$ such that $f(S^n)$ is contained in a subspace of $S^{n+q}$ which can be identified with $S^n\times D^q$. Thus, we can consider a special knot to be $f:S^n\hookrightarrow S^n\times D^q$ such that $f|_{D_{-}^n}$ is the natural inclusion $D_{-}^n\hookrightarrow D_{-}^n\times 0$ and $f($int $D_{+}^n)\subset$ int$(D_{+}^n\times D^q)$, together with a disk extension $\bar{f}:D^{n+1}\hookrightarrow D^{n+1}\times D^N$ with a similarly defined framing of Type~I. The homomorphism $\psi: C_n^q\rightarrow \pi_{n+1}(SG;SO,SG_q)$ is then defined as follows.\\

\begin{theorem}\thlabel{th}
\textit{Given an element $\alpha\in C_n^q$ represented by a special disked embedding $(f,\bar{f}):(S^n, D^{n+1})\hookrightarrow (S^n\times D^q, D^{n+1}\times D^N)$ with Type~I framing, a map $\phi:D^{n+1}\times S^{N-1}\rightarrow S^{N-1}$ represents $\psi(\alpha)\in  \pi_{n+1}(SG;SO,SG_q)$ if there exists an extension $\bar{\phi}:D^{n+1}\times D^N\rightarrow D^N$ i.e., $\bar{\phi}|_{D^{n+1}\times S^{N-1}}=\phi$ such that:}
\begin{enumerate}[(i)]

\item \textit{$\bar{\phi}$ is regular on $0\in D^N$ and $\bar{\phi}^{-1}(0)=\bar{f}(D^{n+1})$ as framed submanifolds,}
\item \textit{$\bar{\phi}_x\in SO_N $ for $x\in D_{-}^n$,}
\item \textit{$\bar{\phi}_x$ is the suspension of a map $D^q\rightarrow D^q$ for $x\in D_{+}^n$.}

\end{enumerate}
\textit{The homomorphism $\psi:C_n^q\rightarrow \pi_{n+1}(SG;SO,SG_q)$ is well defined \cite[Theorem~2.3]{hae} and is an isomorphism for $q\geq 3$ \cite[Theorem~3.4]{hae}.} \\
\end{theorem}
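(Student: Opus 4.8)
The plan is to realize $\psi$ as a Pontryagin–Thom construction and to build its inverse by taking transverse preimages, with the codimension hypothesis $q\geq 3$ entering only in the proof that the inverse is well defined. First I would construct $\psi$ and check it lands in the triad group. Given a special disked embedding $(f,\bar f)$ with Type~I framing, the framing trivialises the normal bundle of $\bar f(D^{n+1})$ inside $D^{n+1}\times D^N$ and hence provides a tubular neighbourhood $\bar f(D^{n+1})\times D^N$; collapsing the complement of this neighbourhood onto $\partial D^N$ and projecting the neighbourhood to the $D^N$-factor yields an extension $\bar\phi\colon D^{n+1}\times D^N\to D^N$ which is regular at $0$ with $\bar\phi^{-1}(0)=\bar f(D^{n+1})$ as framed submanifolds, so that $\phi=\bar\phi|_{D^{n+1}\times S^{N-1}}$ is a candidate value. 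The two boundary conditions built into the Type~I framing are exactly what force $\bar\phi_x$ to be linear over $D_-^n$ (condition~(ii), landing $\phi$ in $SO_N$) and a suspension over $D_+^n$ (condition~(iii), landing $\phi$ in $SG_q$), while the equator condition $\phi(S^{n-1})\subset SO_q$ is automatic; thus $\phi$ represents a class in $\pi_{n+1}(SG;SO,SG_q)$.

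Next I would verify independence of choices and concordance invariance. Since the disk extension $\bar f$ and its framing are unique up to homotopy for $N$ large, and enlarging $N$ only suspends $\bar\phi$, the class of $\phi$ is independent of these choices; a framed concordance between two representatives produces, by applying the same collapse fibrewise over $D^{n+1}\times[0,1]$, a homotopy of triad maps, so $\psi$ descends to $C_n^q$. That $\psi$ is a homomorphism then follows because the additive structure on $C_n^q$ is the connected sum performed inside $D_+^n$, where the collapse maps take values in $SG_q$ and superpose to the pointwise product defining the group operation on $\pi_{n+1}(SG;SO,SG_q)$.

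The heart of the argument, and the main obstacle, is the isomorphism, which I would prove by constructing an inverse. Given a triad map $\phi$, extend it to $\bar\phi\colon D^{n+1}\times D^N\to D^N$ and make it transverse to $0$; then $W:=\bar\phi^{-1}(0)$ is a framed $(n+1)$-submanifold, standard over $D_-^n$ by~(ii) and contained in $D_+^n\times D^q$ over $D_+^n$ by~(iii), so that $\partial W$ is automatically embedded in $S^n\times D^q\subset S^{n+q}$. The difficulty is to recognise $(\partial W, W)$ as a genuine special disked embedding, i.e. to show that $W$ is a disk and $\partial W$ a sphere rather than merely some framed manifold of the correct homology type. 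This is exactly where $q\geq 3$ is indispensable: codimension at least three permits surgeries below the middle dimension, together with the Whitney trick, to kill the homotopy of $W$ and of its boundary pieces, turning them into a disk and a sphere while preserving the codimension-$q$ (suspension) structure over $D_+^n$. I expect this surgery step to be the most delicate point, since it is precisely here that the dimension hypothesis $q\geq 3$ is forced upon us.

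Surjectivity is then immediate, and injectivity follows by the same transverse-preimage mechanism applied to a triad null-homotopy of $\psi(\alpha)$: this produces a framed concordance of $\bar f$ to the trivial disk, hence a concordance of $f$ to the trivial knot, so that $\alpha=0$ (equivalently, $f$ is slice, cf. Lemma~\ref{slice}). Assembling these steps gives that $\psi$ is a well-defined isomorphism for $q\geq 3$.
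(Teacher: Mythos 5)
Your overall strategy --- Pontryagin--Thom to pass from framed disked embeddings to triad classes, transversality and codimension-$\geq 3$ surgery for the isomorphism, and sliceness for injectivity --- is the same as Haefliger's, which is what the paper records. But two of your steps do not work as written. First, the tubular-neighbourhood collapse does not produce a map satisfying condition~(ii): over $D_{-}^n$ a collapse map crushes the complement of a small normal disk onto the boundary and is merely a degree-one map, not an element of $SO_N$. To obtain (ii) one must take $\bar{\phi}$ over $D_{-}^n$ to be the \emph{linear} map determined by the framing (possible because $f|_{D_{-}^n}=id$, so the \enquote{tube} is the whole fibre $x\times D^N$), and the extension over $D_{+}^n\times D^q$ compatibly with the given framing, and then over the interior of $D^{n+1}$, is exactly what the obstruction-theoretic \cite[Lemma~2.4]{hae} provides; your collapse gives no control at the equator, where $\bar{\phi}_x$ must be simultaneously orthogonal and a suspension.

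Second, and more seriously, your inverse construction performs surgery on the wrong object. You propose to surger the $(n+1)$-manifold $\bar{\phi}^{-1}(0)$ into a disk; but nothing pins down the concordance class of the result (different choices of surgeries could a priori yield non-concordant disks), so the \enquote{inverse map} is not obviously well defined, and \enquote{surjectivity is then immediate} conceals precisely the hard step. Haefliger instead keeps the free face $\bar{f}(D^{n+1})=\bar{\phi}^{-1}(0)$ fixed and does surgery on the $(n+2)$-manifold with corners $W=\bar{\phi}^{-1}(I)$, the preimage of a segment joining $0$ to a boundary regular value $e_1$: for surjectivity he builds such a $W$ whose boundary stratum in $D^{n+1}\times S^{N-1}$ is a prescribed $V=\phi^{-1}(e_1)$ (\cite[Argument~3.5]{hae}), and for injectivity he compresses $W$ into $D^{n+1}\times D^q$ so that the free face becomes slice and Lemma~\ref{slice} applies. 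Your injectivity sketch (\enquote{a triad null-homotopy produces a framed concordance}) likewise requires this surgery on an $(n+2)$-dimensional transverse preimage to turn a homotopy of maps into an actual concordance of embeddings. So the architecture is right, but the surgery must be carried out one dimension up, relative to the disk you are trying to recover.
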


In the proof of well-definedness of $\psi$ \cite[Theorem 2.3]{hae}, Haefliger shows the existence of such a map $\bar{\phi}$ as follows. Define $\bar{\phi}_{-}:D^n_{-}\times D^N\rightarrow D^N$ uniquely as a linear map such that $(\bar{\phi}_{-})_x\in SO_N$ for $x\in D^n_{-}$ and $\bar{\phi}^{-1}_{-}(0)=f(D^n_{-})$, as framed submanifolds. Using obstruction theory \cite[Lemma~2.4]{hae} the restriction $\bar{\phi}_{-}|_{S^{n-1}\times D^q}$ can be extended to $\bar{\phi}_{-}|_{D^n_{+}\times D^q}$ with the given framing on $f(D^n_{+})$. Define $\bar{\phi}_{+}:D^n_{+}\times D^N\rightarrow D^N$ to be the $(N-q)$-suspension of $\bar{\phi}_{-}:{D^n_{+}\times D^q}\rightarrow D^q$. By using \cite[Lemma~2.4]{hae} again, we extend $\bar{\phi}_{-}\cup \bar{\phi}_{+}:S^n\times D^N\rightarrow D^N$ to a map $\bar{\phi}:D^{n+1}\times D^N\rightarrow D^N$ verifying (i)-(iii) above. To show $\psi$ is well defined, he uses the same argument invoking \cite[Lemma~2.4]{hae} twice to construct a homotopy between two maps $\phi_0,\phi_1:D^{n+1}\times S^{N-1}\rightarrow S^{N-1}$ corresponding to two concordant embeddings $(f_0,\bar{f_0}),(f_1,\bar{f_1}):(S^n, D^{n+1})\hookrightarrow (S^n\times D^q, D^{n+1}\times D^N)$.\\

To prove the isomorphism \cite[Theorem 3.4]{hae}, Haefliger interprets the group $\pi_{n+1}(SG;SO,SG_q)$ in terms of cobordisms (we refer this as Pontryagin-Thom type construction). An element of $\pi_{n+1}(SG;SO,SG_q)$ represented by a map $\phi: D^{n+1}\times S^{N-1}\rightarrow S^{N-1}$ as in subsection \ref{triad} which is regular on $e_1$, corresponds to a framed $(n+1)$-submanifold $V=\phi^{-1}(e_1)\subset D^{n+1}\times S^{N-1}$ with two parts of boundary:
\begin{itemize}
\item $V\cap (D^n_{-}\times S^{N-1})$ is the graph of some map $g:D^n_{-}\rightarrow S^{N-1}$ with the framing at points $(x,g(x))$ lying inside $x\times S^{N-1}$ and orthonormal. Indeed, for $x\in D^n_{-}$, the map $\phi_x:S^{N-1}\rightarrow S^{N-1}$ is linear and therefore the preimage of $e_1$ is just a point.
\item $V\cap (D^n_{+}\times S^{N-1})$ is the suspension of a framed submanifold in $D^n_{+}\times S^{q-1}$, since for any $x\in D^n_{+}$, the map $\phi_x:S^{N-1}\rightarrow S^{N-1}$ is the suspension of a map $S^{q-1}\rightarrow S^{q-1}$. 
\end{itemize}
Thus, $\pi_{n+1}(SG;SO,SG_q)$ can be described as the group of cobordisms of framed $(n+1)$-manifolds with such boundary conditions.\\

He then considers $\bar{\phi}$ which exists by \cite[Theorem~2.3]{hae}. Note that $\bar{\phi}:D^{n+1}\times D^N\rightarrow D^N$ can always be slightly changed so that $\bar{\phi}^{-1}(\partial D^N)\subset D^{n+1}\times \partial D^N$. The preimage $\bar{\phi}^{-1}(I)\subset D^{n+1}\times D^N$ of the segment $I$ joining $0$ and $e_1$ in $D^N$ is a framed $(n+2)$-manifold $W$ with corners ($\bar{\phi}$ is chosen to be transversal to $I$). In particular, $\partial W$ has the following strata:
\begin{itemize}
\item a free face given by the framed disk $\bar{f}(D^{n+1})=\bar{\phi}^{-1}(0)$ 
\item $\partial W\cap (D^{n+1}\times S^{N-1})=\bar{\phi}^{-1}(e_1)=V$
\item $\partial W\cap (D^n_{-}\times D^N)$ is the radial extension of $V\cap (D^n_{-}\times S^{N-1})$
\item $\partial W\cap (D^n_{+}\times D^N)$ is the $(N-q)$-fold suspension of a framed submanifold in $D^n_{+}\times D^q$.
\end{itemize}

As a result, he restates the homomorphism defined in Theorem~\ref{th} as follows. Given an element $\alpha\in C_n^q$ represented by a special disked embedding $(f,\bar{f}):(S^n, D^{n+1})\hookrightarrow (S^n\times D^q, D^{n+1}\times D^N)$ with Type~I framing, a framed submanifold $V\subset D^{n+1}\times S^{N-1}$ as defined above represents $\psi(\alpha)\in  \pi_{n+1}(SG;SO,SG_q)$ if there exists a framed submanifold $W\subset D^{n+1}\times D^N$ with the boundary strata as given above.

According to \cite[Argument 3.5]{hae}, to show surjectivity he applies surgery to construct $W$ satisfying $\partial W\cap (D^{n+1}\times S^{N-1})=V$ for a given $V$. For injectivity, he shows if $[(f,\bar{f})]$ maps to the trivial element $[V]$ of $\pi_{n+1}(SG;SO,SG_q)$, then the corresponding $W$ can be modified using surgery so that it is embedded in $D^{n+1}\times D^q$. In particular, the free face $\bar{f}(D^{n+1})$ of $W$ is inside $D^{n+1}\times D^q\cong D^{n+q+1}$, and therefore the corresponding $f=\bar{f}|_{\partial D^{n+1}}$ is slice i.e., concordant to the trivial embedding of $S^n$ in $S^{n+q}$, by Lemma~\ref{slice}. 

\subsection{Framed embeddings of $S^n$ in $S^{n+q}$}\label{subsec1}
Let us recall that we always consider framed embeddings with a framing preserving the natural orientation. For $q\geq 3$, Haefliger expressed the group $FC_n^q$ of concordance classes of framed embeddings of $S^n$ in $S^{n+q}$ as $\tilde{\pi}_{n+1}(SG;SO,SG_q)$. An element in $\tilde{\pi}_{n+1}(SG;SO,SG_q)$ is represented by a continuous map $\phi:D^{n+1}\rightarrow SG$ i.e., for $x\in D^{n+1}$, $\phi(x):S^{N-1}\rightarrow S^{N-1}$, for some large $N$, such that $\phi(D_{-}^n)\subset SO$, $\phi(D_{+}^n)\subset SG_q$ and $\phi(\partial D_{-}^n=\partial D_{+}^n)=~id$. Again, abusing notation we also view $\phi$ as a map $\phi:D^{n+1}\times S^{N-1}\rightarrow S^{N-1}$ and sometimes write $\phi_x$ for $\phi(x)$.

\begin{remark}\label{remark}It is easy to see that the group $\tilde{\pi}_{n+1}(SG;SO,SG_q)$ is isomorphic to $\pi_{n}\Big((SO\times_{SG}^{h} SG_q)\simeq$ hofib$(SO\rightarrow SG/SG_q)\Big)$. Moreover, the upper long exact sequence in (\ref{**}) is the long exact sequence of the pair $(SG/SG_q,SO)$.
\end{remark}

\begin{remark}\cite[\S 5.1]{hae}.
Two concordant framed embeddings of $S^n$ in $S^{n+q}$ are isotopic when $q\geq3$ and therefore, $FC_n^q=\pi_{0}Emb^{fr}(S^n,S^{n+q})=\pi_{0}Emb_{\partial}^{fr}(D^n, D^{n+q})$.
 \end{remark}

\begin{lemma}\cite[\S 5]{hae}.\thlabel{slice2}
\textit{A framed embedding $S^n\hookrightarrow S^{n+q}$ is concordant to the trivial one if and only if it is slice i.e., if it can be extended to an embedding $D^{n+1}\hookrightarrow D^{n+q+1}$ along with the framing.}
\end{lemma}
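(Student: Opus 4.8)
The statement to prove is \thref{slice2}: a framed embedding $S^n \hookrightarrow S^{n+q}$ is concordant to the trivial one if and only if it is slice, meaning it extends to an embedding $D^{n+1} \hookrightarrow D^{n+q+1}$ together with the framing.

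The plan is to mirror the unframed case (\thref{slice}) while carefully tracking the framing data throughout. First I would establish the easy direction: if the framed embedding $f: S^n \hookrightarrow S^{n+q}$ is concordant to the trivial embedding through framed embeddings, then the concordance itself furnishes the required slice disk. Recall that a framed concordance is an embedding $F: S^n \times [0,1] \hookrightarrow S^{n+q} \times [0,1]$ (respecting framings and level-preserving at the ends) interpolating between $f$ and the trivial inclusion. One identifies $S^{n+q} \times [0,1]$ with a collar inside $D^{n+q+1}$, and since the trivial end bounds the standard disk $D^{n+1} \hookrightarrow D^{n+q+1}$ with its standard framing, one glues this standard slice disk to the trace of the concordance along the trivial end. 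The resulting embedded $D^{n+1} \hookrightarrow D^{n+q+1}$ restricts to $f$ on the boundary, and the framings match by construction, giving the slice condition.

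For the converse — the substantive direction — suppose $f$ is framed-slice via $\bar{f}: D^{n+1} \hookrightarrow D^{n+q+1}$ extending the framing. I would produce a framed concordance from $f$ to the trivial embedding out of this slice disk. The idea is to push the slice disk $\bar{f}(D^{n+1})$ radially: using the radial coordinate on $D^{n+q+1} \cong D^{n+q} \times [0,1] / \!\sim$ (cone coordinates), one thickens $\bar{f}$ to a family of embeddings of $S^n$ into concentric spheres, which upon normalizing gives a path in $Emb^{fr}(S^n, S^{n+q})$ from $f$ to a trivial embedding. Concretely, one uses the fact that $\bar{f}$ meets the boundary sphere transversally in $f(S^n)$ and that the framing on $\bar{f}$ restricts to the given framing on $f$; shrinking toward the cone point collapses $f$ to an unknotted, trivially framed embedding. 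One must verify that the framing condition (respecting the natural orientation) is preserved along this deformation, which follows because the framing on the slice disk is carried along continuously.

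The main obstacle I anticipate is the bookkeeping of the framing under these geometric moves, rather than any deep topological input. In the unframed setting (\thref{slice}) the argument is essentially a standard observation, but here one must confirm at each step that the orientation-compatible framing is genuinely transported and that the orientation convention stated in the introduction is respected at both ends. In particular, in the easy direction the gluing of the standard slice disk requires the standard framing at the trivial end to agree with what the concordance produces, and in the converse direction the radial deformation must keep the framing orthonormal and orientation-preserving throughout. A secondary subtlety is ensuring the dimension range $q \geq 3$ is used precisely where needed, namely to invoke that framed concordance implies framed isotopy (so that ``concordant to the trivial one'' is the correct equivalence), exactly as in the unframed analogue. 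Once the framing is shown to be carried coherently, the geometric core of the argument reduces to the same slicing picture as \thref{slice}, so I would present the framed version as a framing-enhanced repetition of Haefliger's original reasoning.
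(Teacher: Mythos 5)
Your forward direction (concordant to trivial $\Rightarrow$ slice) is fine and matches the argument the paper gives for the analogous Lemma~\ref{slice3}: cap off the trivial end of the concordance with the standard framed disk, using $S^{n+q}\times[0,1]$ as a collar of $D^{n+q+1}$. (The paper states Lemma~\ref{slice2} itself without proof, citing Haefliger, but proves Lemma~\ref{slice3} by exactly this attach-a-half-disk move.)

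The converse as you describe it has a genuine gap. You propose to slide the slice disk $\bar f(D^{n+1})\subset D^{n+q+1}$ along the radial (cone) coordinate so that its intersections with concentric spheres give a path in $Emb^{fr}(S^n,S^{n+q})$ down to a trivial embedding. But a slice disk is not in radial position: the radial function restricted to $\bar f(D^{n+1})$ is a Morse function with critical points of arbitrary index, so the level sets are not spheres and are not even embeddings of a fixed manifold --- their topology changes as you cross critical values. Worse, if this radial deformation worked it would produce an \emph{isotopy} from $f$ to the unknot, i.e.\ it would show every slice knot is unknotted by a one-line argument; that is false in codimension $2$ and, in codimension $\geq 3$, is a consequence of Haefliger's concordance-implies-isotopy theorem rather than an input to it. The correct (and still elementary) move, which is what the paper does for Lemma~\ref{slice3}, is the opposite of your easy direction: choose an interior point $p$ of the slice disk, remove a small ball $B$ around $p$ meeting $\bar f(D^{n+1})$ in a standardly embedded, trivially framed small disk; then $D^{n+q+1}\smallsetminus B\cong S^{n+q}\times[0,1]$ and $\bar f(D^{n+1})\smallsetminus B\cong S^{n}\times[0,1]$, and this punctured slice disk, with its framing, \emph{is} a framed concordance from $f$ to a trivial framed embedding. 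No radial monotonicity is needed, and the framing bookkeeping you worry about reduces to checking triviality of the framing near the single point $p$.
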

 
\subsection*{The isomorphism $\tilde{\psi}:FC_n^q\rightarrow \tilde{\pi}_{n+1}(SG;SO,SG_q)$}
 
The natural map $\tilde{\psi}$ between the two groups is defined as in the \enquote{non-framed} case. Firstly, an element in $FC_n^q$ can be represented by a special framed knot $f:S^n\hookrightarrow S^{n+q}$ which is the natural inclusion on $D_{-}^n$ with trivial framing $\{e_{n+2},\ldots,e_{n+q+1}\}$, and $f($int $D_{+}^n)\subset$ int$(D_{+}^{n+q})$ with some non-trivial framing. Such a framed knot is assigned a special disked embedding $(f,\bar{f}):(S^n,D^{n+1})\hookrightarrow (S^{n+q},D^{n+N+1})$ along with a framing as follows. We extend $f:S^n\hookrightarrow S^{n+q}$ to a disk embedding $\bar{f}:D^{n+1}\hookrightarrow D^{n+N+1}$ for $N$ large enough. For the framing on $\bar{f}(D^{n+1})$, which is defined uniquely up to homotopy, we first suspend the framing on $D_{+}^n$ inside $D_{+}^{n+q}$ to a framing inside $D^{n+N+1}$ by adding vectors $\{e_{n+q+2},\ldots,e_{n+N+1}\}$. Then we extend the obtained framing to the entire disk $D^{n+1}$ inside $D^{n+N+1}$. Note that the framing on $\bar{f}|_{D_{-}^n}$ may now be non-trivial (and does not have to be a suspension), while the framing on $\bar{f}|_{D_{+}^n}$ is the suspension of the framing on $D^n_{+}$ inside~$D^{n+q}_{+}$. But we still obtain a trivial framing on the equator $S^{n-1}=D_{-}^n\cap D_{+}^n$. Such boundary condition on the framing defined on $\bar{f}$ is referred as \textbf{\textit{Type~II}}. Therefore, a representative in $FC_n^q$ can be considered to be a \textbf{\textit{special disked embedding $(f,\bar{f})$ with Type~II framing}}. For $q\geq 3$, the following groups are isomorphic:

\begin{center}

  $FC^q_n=$\{concordance/isotopy classes of framed embeddings $S^n\hookrightarrow S^{n+q}$\}\\
   $\updownarrow$\\
  
     \{concordance/isotopy classes of special framed embeddings $S^n\hookrightarrow S^{n+q}$\}\\
           $\updownarrow$\\
     
  \{concordance/isotopy classes of special disked embeddings $(S^n,D^{n+1})\hookrightarrow (S^{n+q},D^{n+N+1})$ with Type~II framing\}\\
  
  \end{center}
  
\vspace{0.2cm}

Using the tubular neighborhood theorem, we can transform any special framed knot $f:S^n\hookrightarrow S^{n+q}$ into $f:S^n\hookrightarrow S^n\times D^q$, with a framed disk extension $\bar{f}:D^{n+1}\hookrightarrow D^{n+1}\times D^N$. Thus, an element in $FC^q_n$ can be represented by a pair $(f,\bar{f}):(S^n,D^{n+1})\hookrightarrow (S^n\times D^q, D^{n+1}\times D^N)$ with a framing of Type~II. We define the homomorphism $\tilde{\psi}: FC_n^q\rightarrow \tilde{\pi}_{n+1}(SG;SO,SG_q)$ exactly as in Theorem~\ref{th} by adding to condition ii) that $\bar{\phi}_x=id$ when $x\in S^{n-1}$. For $q\geq 3$, $\tilde{\psi}$ is an isomorphism \cite[Theorem~5.7]{hae}. This result is stated without proof because the argument follows the same lines as in the \enquote{non-framed} case. Note that Lemma~\ref{slice2} is used in the proof of injectivity of $\tilde{\psi}$ in the same way as Lemma~\ref{slice} is necessary for injectivity of $\psi$.

\section{Framed disked embeddings}\label{sec2}

We now define a new group of concordance classes of special disked embeddings with a \textbf{\textit{Type~III}} framing. Namely, this time we require the framing to be trivial along $D_{-}^n$. 
To be precise, we consider special disked embeddings $(f,\bar{f}):(S^n,D^{n+1})\hookrightarrow (S^{n+q},D^{n+N+1})$ where the framing on $\bar{f}$ comes with the following boundary condition: $\bar{f}|_{D_{-}^n}$ has trivial framing, while the framing on $\bar{f}|_{D_{+}^n}$ inside $D^{n+N+1}$ is obtained as the suspension of a framing inside~$D_{+}^{n+q}$. \\
\begin{multline*}
\overline{FC}{}_n^q:=\{ \text{concordance classes of special disked embeddings} \\(f,\bar{f}):(S^n,D^{n+1})\hookrightarrow (S^{n+q},D^{n+N+1}) \text{ with Type~III framing}\}.
\end{multline*}

Note that since the codimension condition $q\geq 3$ is satisfied, concordance and isotopy relations coincide for special disked embeddings with all three boundary restrictions on framing. 

\subsection{The group $\pi_{n+1}(SG,SG_q)$}
An element in $\pi_{n+1}(SG,SG_q)$ is represented by a continuous map $\phi:D^{n+1}\rightarrow SG$ such that $\phi|_{D_{-}^n}= id$ and $\phi(D_{+}^n)\subset SG_q$.\\

This representation is equivalent to the usual definition of a relative homotopy group i.e., $\pi_{n+1}(SG;*,SG_q)=\pi_{n+1}(SG,SG_q)$, since $D_{-}^n$ can be collapsed to get the base-point in the relative group.

\subsection{The isomorphism $\xi:\overline{FC}{}_n^q\rightarrow \pi_{n+1}(SG,SG_q)$}\label{sub3}
Following the same argument as in subsection \ref{sub}, when an element in $\overline{FC}{}_n^q$ is represented by a special disked embedding $(f,\bar{f}):(S^n,D^{n+1})\hookrightarrow (S^n\times D^q,D^{n+1}\times D^N)$ with Type~III framing, there is a natural homomorphism $\xi: \overline{FC}{}_n^q\rightarrow \pi_{n+1}(SG,SG_q)$ defined as in Theorem~\ref{th} by replacing condition ii) with $\bar{\phi}_x=id$ for $x\in D_{-}^n$. By Haefliger's surgery construction \cite[Argument~3.5]{hae} that proves \cite[Theorem~3.4]{hae}, we conclude:
\begin{theorem}\thlabel{mt}
\textit{The homomorphism $\xi:\overline{FC}{}_n^q\rightarrow \pi_{n+1}(SG,SG_q)$ is an isomorphism for $q\geq3$.}
\end{theorem}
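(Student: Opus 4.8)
The plan is to directly adapt Haefliger's construction from subsection \ref{sub}, making the key observation that the Type~III boundary condition on the framing of $\bar{f}$ corresponds exactly, under the Pontryagin--Thom correspondence, to the relative homotopy group $\pi_{n+1}(SG,SG_q)$ rather than the triad group $\pi_{n+1}(SG;SO,SG_q)$. Indeed, the essential difference between the three framing types lies in the boundary behavior over $D_{-}^n$: in Type~I the map $\bar{\phi}_x$ for $x\in D_{-}^n$ is an arbitrary element of $SO_N$, whereas the defining condition for $\xi$ replaces this with $\bar{\phi}_x=id$. Since requiring $\phi_x=id$ on all of $D_{-}^n$ is precisely the condition in the representation of $\pi_{n+1}(SG,SG_q)$ given in subsection~5.1, the target group is automatically the relative one. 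I would first verify that $\xi$ is well defined by the same double application of the extension lemma \cite[Lemma~2.4]{hae} that Haefliger uses for $\psi$, checking that the construction of $\bar{\phi}$ (and of the homotopy $\bar{\phi}_t$ realizing a concordance) goes through verbatim with the constraint $\bar{\phi}_x=id$ on $D_{-}^n$ in place of $\bar{\phi}_x\in SO_N$.

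Next I would establish surjectivity and injectivity by transplanting Haefliger's surgery argument \cite[Argument~3.5]{hae} to the present setting. Recall that in the cobordism reinterpretation, an element of $\pi_{n+1}(SG,SG_q)$ corresponds to a framed $(n+1)$-submanifold $V=\phi^{-1}(e_1)$ with the stratified boundary conditions described in subsection~\ref{sub}: over $D_{-}^n$ the part $V\cap(D_{-}^n\times S^{N-1})$ is a graph of a map $g\colon D_{-}^n\to S^{N-1}$, now with the framing at $(x,g(x))$ being the \emph{trivial} orthonormal framing (since $\phi_x=id$ forces $g$ to be constant and the framing to be standard), and over $D_{+}^n$ the suspension condition persists. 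For surjectivity I would, given such a $V$, apply surgery to produce a framed $(n+2)$-manifold $W\subset D^{n+1}\times D^N$ with the required boundary strata whose free face is the disk $\bar{f}(D^{n+1})$; the Type~III constraint affects only the stratum over $D_{-}^n$, where the triviality of the framing is automatically inherited. For injectivity I would show that if $[(f,\bar f)]$ maps to the trivial class, the associated $W$ can be pushed by surgery into $D^{n+1}\times D^q\cong D^{n+q+1}$, whence the free face lies in $D^{n+q+1}$ and $f$ is slice; the slice criterion of Lemma~\ref{slice} then forces $[(f,\bar f)]$ to be trivial in $\overline{FC}{}_n^q$. The codimension hypothesis $q\geq 3$ is what makes the surgeries available in the metastable range, exactly as in Haefliger's original argument.

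\emph{The main obstacle} I anticipate is not the surgery itself but the careful bookkeeping of the framing over $D_{-}^n$: one must confirm that replacing $\bar{\phi}_x\in SO_N$ by $\bar{\phi}_x=id$ is genuinely compatible with the extension lemma \cite[Lemma~2.4]{hae} and with the suspension normalization over the equator $S^{n-1}$. In the Type~I case the equator maps to $SO_q=SO\cap SG_q$, and the nontrivial framing over $D_{-}^n$ is what records the class in $\pi_n(SO,SO_q)$; by trivializing it we are effectively quotienting out the $SO$-contribution, and I must check that this is exactly the collapse $\pi_{n+1}(SG;*,SG_q)=\pi_{n+1}(SG,SG_q)$ and introduces no hidden obstruction on the equator. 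Since Haefliger's Lemma~2.4 is an obstruction-theoretic extension result whose hypotheses concern only the codimension and the connectivity of the fibers, and these are insensitive to whether the boundary datum over $D_{-}^n$ is $SO_N$-valued or constant, I expect this compatibility to hold; once it is confirmed, the remainder of the proof is a faithful transcription of \cite[Argument~3.5]{hae}, and the theorem follows.
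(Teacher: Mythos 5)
Your overall strategy coincides with the paper's: define $\xi$ by replacing condition (ii) of Theorem~\ref{th} with $\bar{\phi}_x=id$ for $x\in D_{-}^n$, check well-definedness by the same two applications of Haefliger's extension lemma, and transplant \cite[Argument~3.5]{hae} for surjectivity and injectivity. Your bookkeeping of the cobordism picture over $D_{-}^n$ (the graph becomes constant at $e_1$ with the standard framing) is also correct.

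There is, however, a genuine gap in the final step of your injectivity argument. After surgering $W$ into $D^{n+1}\times D^q$ you conclude that $f$ is slice and invoke Lemma~\ref{slice} to deduce that $[(f,\bar f)]$ is trivial in $\overline{FC}{}_n^q$. Lemma~\ref{slice} only yields triviality of $[f]$ in $C_n^q$. An element of $\overline{FC}{}_n^q$ is a framed disked embedding, and the forgetful map $\overline{FC}{}_n^q\rightarrow C_n^q$ has nontrivial kernel in general --- from the exact sequence (\ref{*}) one degree up, the kernel is the image of $\theta:Im_{n+1}^q\rightarrow \overline{FC}{}_n^q$ --- so the trivial knot $f=id$ equipped with a nontrivial disk/framing datum $\bar f$ can represent a nonzero class. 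What is actually needed is the sliceness criterion for framed disked embeddings, Lemma~\ref{slice3}: $(f,\bar f)$ is null-concordant in $\overline{FC}{}_n^q$ if and only if there is a \emph{framed} embedding $H:D^{n+2}\hookrightarrow D^{n+N'+2}$ restricting to $\bar f$ on $D_{-}^{n+1}$ and to the suspension of a framed embedding inside $D^{n+q+1}$ on $\partial_{+}D^{n+2}$. Accordingly, the surgery must be tracked so that the entire framed manifold $W$ (not merely its free face) furnishes such an $H$; this is precisely why the paper states and proves Lemma~\ref{slice3} and uses it, rather than Lemma~\ref{slice}, for the injectivity of $\xi$. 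With that substitution your argument matches the paper's; the paper also notes an alternative proof via the five lemma applied to diagram (\ref{five}), which you do not need but which sidesteps redoing the surgery altogether.
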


The sliceness Lemma~\ref{slice3} is used to prove injectivity of $\xi$, similarly to the cases of $\psi$ and $\tilde{\psi}$. Note that Theorem~\ref{mt} can be deduced from the proof of Theorem~\ref{mth2} given in section \ref{sec4}. In particular, with $\psi$ and $\eta$ as isomorphisms in (\ref{five}), $\xi$ is also an isomorphism by the five lemma.\\

As a review, the following tables point to the main difference among all the groups we discussed in the three cases. In terms of special disked embeddings with different boundary conditions on framing of $\bar{f}$:\\
 \begin{center}
\begin{tabular}{| l | r |}
\hline
$C_n^q$  &   trivial framing at the base-point $*$ (Type I)\\
 \hline
$FC_n^q$ & trivial framing at the equator $S^{n-1}$ (Type II) \\
 \hline
$\overline{FC}{}_n^q$ & trivial framing at $D_{-}^n$ (Type III)\\ \hline

\end{tabular}
    \end{center}
    
\vspace{0.2cm}
    
The corresponding homotopy groups differ as follows:  \\  
  \begin{center}
\begin{tabular}{| l | r |}
\hline

$\pi_{n+1}(SG;SO,SG_q)$  &  $\phi(*)=id$\\
 \hline

 $\tilde{\pi}_{n+1}(SG;SO,SG_q)$  &  $\phi(S^{n-1})=id$\\
 \hline

$\pi_{n+1}(SG,SG_q)$ & $\phi(D_{-}^n)=id$\\
 \hline

\end{tabular}
    \end{center}
    
\vspace{0.2cm}
    
\begin{remarkdf}\label{rkdf}

Consider a disked embedding $(f,\bar{f}):(S^n,D^{n+1})\hookrightarrow (S^{n+q},D^{n+N+1})$ which is not necessarily special, i.e., without a fixed behavior at $D_{-}^n$. Assume both $f$ and $\bar{f}$ are framed embeddings such that framing on $\bar{f}(D^{n+1})$ inside $D^{n+N+1}$ is defined by extending the suspension of the framing of $f(S^n)\subset S^{n+q}$. We call such a pair $(f,\bar{f})$ a \textbf{\textit{framed disked embedding}}. The concordance classes of such embeddings are the same as those of special ones with Type~III framing representing elements in $\overline{FC}{}_{n}^q$. It is because given any framed disked embedding, we can always isotope it, so that near the base-point $*\in \partial D_{-}^n=S^{n-1}$ it is the identity inclusion with the trivial framing. Then we can reparametrize the sphere so that the small neighborhood of $*$ is $D_{-}^n$ and the rest is $D_{+}^n$. As a result, we get a special disked embedding $(f,\bar{f})$ with Type~III framing. Therefore, we can describe $\overline{FC}{}_n^q$ as the group of concordance classes of framed disked embeddings $(f,\bar{f})$. 
 \end{remarkdf}

Thus, all the groups $C_n^q$, $FC_n^q$ and $\overline{FC}{}_n^q$ can be described as groups of concordance classes of \enquote{non-special} embeddings:\\

 \begin{center}
\begin{tabular}{| l | r |}
\hline
$C_n^q$  &  embeddings $S^n\hookrightarrow S^{n+q}$\\
 \hline
$FC_n^q$ & framed embeddings $S^n\hookrightarrow S^{n+q}$  \\
 \hline
\raisebox{-0.8ex}{$\overline{FC}{}_n^q$} & \raisebox{-0.5ex}{framed disked embeddings} \\ &\raisebox{0.2ex} {$(S^n,D^{n+1})\hookrightarrow (S^{n+q},D^{n+N+1})$}\\  \hline

\end{tabular}
    \end{center}

Note that special disked embeddings with framing of Type~I or Type~II are not framed disked embeddings because for latter we require the framing on $\bar{f}$ to be the suspension on entire boundary $\partial D^{n+1}=S^n$, see the definition above.

\subsection{Sliceness}

In this subsection, we study an interesting property of sliceness for framed disked embeddings representing elements in the group $\overline{FC}{}_n^q$. 
\begin{definition}
A framed disked embedding $(f,\alpha): (S^n,D^{n+1})\hookrightarrow (S^{n+q},D^{n+N+1})$ is \textbf{slice} if there exists a framed embedding $H:D^{n+2}\hookrightarrow D^{n+N'+2}$ where $N'\geq N$, such that $ H|_{(\partial_{-}D^{n+2}=D_{-}^{n+1})}=\alpha$ and $H|_{\partial_{+}D^{n+2}}$ is the suspension of a framed embedding inside $D^{n+q+1}$ i.e., $H(\partial_{+}D^{n+2})\subset D^{n+q+1}\subset \partial_{+}D^{n+N'+2}= D^{n+N+1}.$
 \end{definition}
 
The trivial element in $\overline{FC}{}_n^q$ is given by the equivalence class of the trivial framed disked embedding $(id,id):(S^n,D^{n+1})\subset (S^{n+q},D^{n+N+1})$ i.e., the trivial pair with the trivial framing. \\

\begin{lemma}\thlabel{slice3}
\textit{A framed disked embedding $(f,\alpha): (S^n,D^{n+1})\hookrightarrow (S^{n+q},D^{n+N+1})$ representing an element in $\overline{FC}{}_n^q$ is concordant to the trivial element $(id,id):(S^n,D^{n+1})\subset  (S^{n+q},D^{n+N+1})$, if and only if $(f,\alpha)$ is slice.}

\end{lemma}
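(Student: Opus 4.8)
The plan is to prove both implications by the same corner-rounding mechanism that underlies Haefliger's sliceness Lemmas~\ref{slice} and~\ref{slice2}, carried out one dimension higher while keeping track of the disk extension and the Type~III framing. The guiding principle is that a concordance of framed disked embeddings is an embedded cylinder $D^{n+1}\times I$, a slice disk is an embedded $D^{n+2}$, and the two are interchanged by the diffeomorphisms $D^{n+1}\times I\cong D^{n+2}$ and $D^{n+N+1}\times I\cong D^{n+N'+2}$ after smoothing corners. The only real content is reconciling the three boundary faces of the cylinder with the two boundary hemispheres $\partial_{\pm}D^{n+2}$ of the disk, compatibly with the framing.

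For the \enquote{only if} direction, suppose $(f,\alpha)$ is concordant to $(id,id)$ via $F:D^{n+1}\times[0,1]\hookrightarrow D^{n+N+1}\times[0,1]$ with $F|_{D^{n+1}\times 0}=\alpha$, $F|_{D^{n+1}\times 1}=id$, and the side $S^n\times[0,1]$ carrying the suspended knot concordance into $D^{n+q}\times[0,1]$ with suspended framing. First I would smooth corners to identify domain and target with $D^{n+2}$ and $D^{n+N'+2}$ (here $N'=N$). Under this identification I send the end $D^{n+1}\times 0$ to the hemisphere $\partial_{-}D^{n+2}$, so that $H|_{\partial_{-}}=\alpha$, and I send the remaining face $(D^{n+1}\times 1)\cup(S^n\times[0,1])$, which is itself an $(n+1)$-disk, to $\partial_{+}D^{n+2}$. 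The trivial end lands in $D^{n+1}\subset D^{n+q+1}$ with trivial framing, and the side lands in $D^{n+q}\times[0,1]$, a part of $D^{n+q+1}$, with suspended framing; hence $H|_{\partial_{+}}$ is a suspension of a framed embedding inside $D^{n+q+1}$, and $H$ is a slice disk for $(f,\alpha)$. This direction uses only bending together with the control of the concordance on its faces.

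For the \enquote{if} direction, suppose $(f,\alpha)$ is slice via $H:D^{n+2}\hookrightarrow D^{n+N'+2}$ with $H|_{\partial_{-}}=\alpha$ and $H|_{\partial_{+}}=:\bar f_{+}$ a suspension of a framed embedding $D^{n+1}\hookrightarrow D^{n+q+1}$ extending $f=H|_{S^n}$. I would first standardize $\bar f_{+}$ near an interior point $p$: by local flatness I isotope $\bar f_{+}$ rel $\partial$ so that on a small disk $D^{n+1}_{p}$ around $p$ it is a flat disk with standard framing. Then $\bar f_{+}$ restricted to the annulus $D^{n+1}\setminus\mathrm{int}\,D^{n+1}_{p}\cong S^n\times I$ is a framed concordance inside $D^{n+q+1}$ from $f$ to the trivial knot, which is exactly the mechanism of Lemma~\ref{slice2}. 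Now I bend $D^{n+2}$ into a cylinder by declaring $D^{n+1}\times 0=\partial_{-}D^{n+2}$, $D^{n+1}\times 1=D^{n+1}_{p}\subset\partial_{+}D^{n+2}$, and the side $S^n\times I=\overline{\partial_{+}D^{n+2}\setminus D^{n+1}_{p}}$. This yields a framed embedding $D^{n+1}\times I\hookrightarrow D^{n+N'+2}\cong D^{n+N'+1}\times I$ whose $0$-end is $\alpha$, whose $1$-end is a standard trivial disk, and whose side is the suspended concordance into $D^{n+q}\times I$, which is precisely a concordance of framed disked embeddings from $(f,\alpha)$ to $(id,id)$.

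The step I expect to be the main obstacle is the bookkeeping that makes both directions honest: matching the cylinder's three faces (the $0$-end, the $1$-end, and the side) with the disk's two hemispheres $\partial_{\pm}D^{n+2}$, while simultaneously identifying the ambient $D^{n+N+1}\times I$ with $D^{n+N'+2}$ so that the suspension and Type~III framing conditions are preserved on the nose. In the \enquote{if} direction, the supporting technical point is the standardization of $\bar f_{+}$ near $p$ together with the verification that the resulting framing on the trivial end is genuinely trivial and that the framing on the side is a suspension; granting this, the equivalence follows exactly as in Haefliger's Lemmas~\ref{slice} and~\ref{slice2}.
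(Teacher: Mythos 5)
Your argument is correct and follows essentially the same route as the paper: both directions convert between the concordance cylinder $D^{n+1}\times[0,1]$ and the slice disk $D^{n+2}$ by rearranging boundary faces and smoothing corners (forward: fold the side and the trivial end into $\partial_{+}D^{n+2}$, which lands in $D^{n+q+1}$ with suspended framing; converse: excise a small half-disk around a standardized interior point of the $\partial_{+}$-face to recover the cylinder). The only cosmetic difference is that the paper attaches an extra half-disk cap along the trivial end at $t=1$ before reading off $\partial_{+}$, whereas you use the existing face $D^{n+1}\times\{1\}$ itself as the cap; the resulting slice disks agree up to isotopy.
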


\begin{proof} Let $F:D^{n+1}\times [0,1]\hookrightarrow D^{n+N'+1}\times [0,1]$, where $N'\geq N$ be a concordance between $(f,\alpha)$ and $(id,id)$. Since at $t=1$ we have a trivial framing, we attach a half disk $\frac{1}{2}D^{n+N'+2}$ along the trivial embedding such that it extends $D^{n+1}$ to the disk $D^{n+2}$, see Figure \ref{fig1}. Since $F$ takes the boundary inside $S^{n+q}\times [0,1]$, therefore attaching this half disk gives the sliceness of the framed knot $S^n\hookrightarrow S^{n+q}$ i.e., a framed extension $D^{n+1}\hookrightarrow D^{n+q+1}$. As a result, we get a framed embedding $H:D^{n+2}\hookrightarrow D^{n+N'+2}$, which on one part of $\partial D^{n+2}$ gives $\alpha$ and on the other, an embedding to $D^{n+q+1}$.
Therefore, $(f,\alpha)$ is slice. \\  

\begin{figure}[htbp]
\centering

\includegraphics[width=0.7\textwidth]{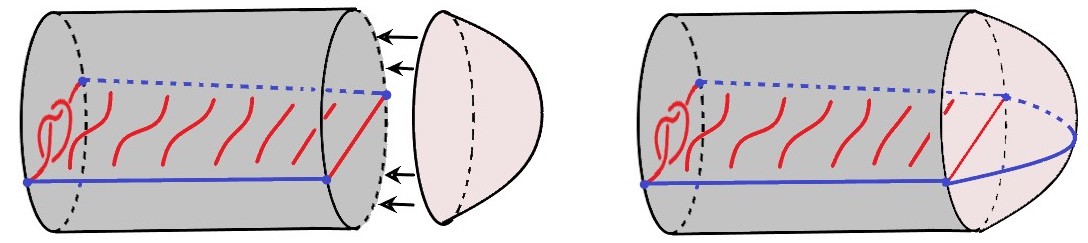}
 
\caption{Attaching a half disk $D^{n+N'+2}$ to $D^{n+N'+1}$ at $t=1$.}
\label{fig1}
\end{figure}
\noindent

\begin{figure}[htbp]
\centering
\includegraphics[width=0.15\textwidth]{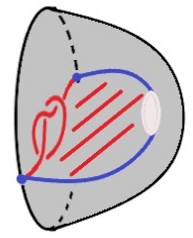}
\caption{Removing a small half disk $D^{n+N'+2}$ going inside.}
\label{fig2}
\end{figure}

The converse is easy to prove by reversing the above argument. We remove a small half disk $\frac{1}{2}D^{n+N'+2}$ around a point in $D^{n+q+1}\subset D^{n+N'+2}$, see Figure~\ref{fig2}, such that the resulting space acts as a concordance between $(f,\alpha)$ and $(id,id)$.

\end{proof}

\section{Embeddings modulo immersions as special disked embeddings}\label{sec3}

Let $D^{n+\infty}:= \cup_{N} D^{n+N}$. By a smooth embedding $D^n\hookrightarrow D^{n+\infty}$, we understand $D^n\hookrightarrow D^{n+N}$ for some $N$ large enough.
Set
\sloppy
$$Emb_{\partial}^{fr}(D^n,D^{n+\infty}):=\bigcup_{N}Emb_{\partial}^{fr}(D^n,D^{n+N}),$$
$$Imm_{\partial}^{fr}(D^n,D^{n+\infty}):=\bigcup_{N}Imm_{\partial}^{fr}(D^n,D^{n+N}).$$

Similarly, we define $SDE_n^q$ to be the space of special disked embeddings $(f,\alpha):(S^n,D^{n+1})\hookrightarrow (S^{n+q},D^{n+1+\infty})$ with Type~III framing. 
By construction, $\pi_0SDE_n^q=\overline{FC}{}_n^q$.\\

We claim that the space $SDE_n^q$ has the same set of connected components as the space of embeddings modulo immersions i.e., $\overline{FC}{}_n^q=\pi_{0}\overline{Emb}_{\partial}(D^n,D^{n+q})$. First we prove the following lemma which gives different geometric interpretations of the group $\pi_0\overline{Emb}_{\partial}(D^n,D^{n+q})$.\\

\begin{lemma} \thlabel{lmm} \textit{For} $q\geq 3$,
\begin{multline}\label{eq1}
\pi_{0}\overline{Emb}(S^n,S^{n+q})=\pi_{0}\overline{Emb}_{\partial}(D^n,D^{n+q})=\pi_{0}\overline{Emb}(S^n,\mathbb{R}^{n+q}) \\ =\pi_{0}\overline{Emb}^{fr}(S^n,S^{n+q})=\pi_{0}\overline{Emb}_{\partial}^{fr}(D^n,D^{n+q})=\pi_{0}\overline{Emb}^{fr}(S^n,\mathbb{R}^{n+q}).
\end{multline}

\end{lemma}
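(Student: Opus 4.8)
The plan is to prove the six equalities in (\ref{eq1}) by establishing a chain of isomorphisms on $\pi_0$, grouping the spaces into comparisons of three distinct types: (a) passing between spherical and long (disk) versions, (b) passing between the target $S^{n+q}$ and $\mathbb{R}^{n+q}$, and (c) passing between framed and unframed versions. Since each space is a homotopy fiber of an embedding-into-immersion inclusion, the cleanest strategy is to compare the relevant fibration sequences. For each comparison I would build a map of fibration sequences and then read off the induced map on $\pi_0$ of the fibers, using a Smale--Hirsch type identification of immersion spaces to control the base.

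\smallskip

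First I would handle type (a), the equality $\pi_0\overline{Emb}(S^n,S^{n+q})=\pi_0\overline{Emb}_\partial(D^n,D^{n+q})$. Here I would restrict a spherical embedding modulo immersion to the hemisphere $D^n_+$, using that the embedding is standard near $D^n_-$ as in the special-knot normalization of \S\ref{sub}; the regular homotopy $\alpha$ to the trivial immersion likewise restricts, and fixing standard behavior near the equator produces the required boundary condition for a long embedding of $D^n$. The inverse is the evident extension by the trivial inclusion. The main point is to check this is a bijection on $\pi_0$, which follows because the hemisphere inclusion $D^n_+\hookrightarrow S^n$ is a homotopy equivalence of pairs at the level of the relevant mapping spaces, exactly the mechanism already exploited in \S\ref{sub}.

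\smallskip

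Next, type (b): replacing $S^{n+q}$ by $\mathbb{R}^{n+q}$. Since any compact embedded/immersed image misses a point of $S^{n+q}$, stereographic projection from a point off the image gives a map $\overline{Emb}(S^n,\mathbb{R}^{n+q})\to\overline{Emb}(S^n,S^{n+q})$; on $\pi_0$ this is a bijection because $S^{n+q}\setminus\{pt\}\cong\mathbb{R}^{n+q}$ and the choice of deleted point is contractible, so the two ambient immersion spaces are weakly equivalent and the induced map of homotopy fibers is a $\pi_0$-isomorphism. For type (c), framed versus unframed, I would compare the fibration $\overline{Emb}^{fr}\to\overline{Emb}$ whose fiber is a space of framings (sections of the unit normal frame bundle). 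The key input is that the orientation convention fixed in the introduction forces the normal framing to be essentially unique up to the relevant homotopy in the modulo-immersions setting, so the forgetful map is a $\pi_0$-bijection; concretely, by Smale--Hirsch the immersion space is governed by bundle monomorphisms and the framing data on the immersion side cancels the framing data on the embedding side in the homotopy fiber.

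\smallskip

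The hardest step I expect is type (c), the framed-versus-unframed comparison, because one must verify that forgetting the framing really is a bijection on $\pi_0$ of the fibers rather than merely surjective. The subtlety is that a framing on the embedding and the corresponding framing data entering the immersion space must be shown to cancel in the homotopy fiber; this is where the orientation-compatibility hypothesis on framings (stated in the introduction) does the work, and where I would be most careful to invoke Smale--Hirsch to identify $\pi_0 Imm$ and $\pi_0 Imm^{fr}$ and track the framing obstruction. Once these three comparison principles are set up, all six equalities in (\ref{eq1}) follow by assembling the individual $\pi_0$-isomorphisms, and the claim $\overline{FC}{}_n^q=\pi_0\overline{Emb}_\partial(D^n,D^{n+q})$ then connects this to the geometric group via the identification $\pi_0 SDE_n^q=\overline{FC}{}_n^q$ already recorded above.
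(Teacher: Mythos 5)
Your overall architecture (a chain of $\pi_0$-comparisons via maps of fibration sequences) matches the paper's, and your treatment of the $S^{n+q}$-versus-$\mathbb{R}^{n+q}$ step is essentially the paper's own argument: perturb to miss the point at infinity, check the comparison on $\pi_0$ and $\pi_1$ of the embedding and immersion spaces, and apply the five lemma. The difference is that for the other two comparisons the paper does not argue directly but cites \cite[Theorem~1.1]{vt3} (sphere source versus disk source) and \cite[Proposition~1.2]{song} (framed versus unframed), and your proposed replacements for these citations are exactly where the gaps lie. For the sphere-versus-disk step, ``restrict to $D^n_+$, extend back by the trivial inclusion, and observe the hemisphere inclusion is a homotopy equivalence'' is not yet an argument: surjectivity on $\pi_0$ requires deforming an arbitrary pair $(f,\alpha)$ --- the embedding \emph{and} the entire regular homotopy path --- to one that is standard on $D^n_-$, and injectivity requires deforming paths of such pairs through such pairs. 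This is precisely a $\pi_0$/$\pi_1$ (in fact contractibility) computation for the base of the restriction fibration, namely $\mathrm{hofib}\bigl(Emb(D^n_-,S^{n+q})\rightarrow Imm(D^n_-,S^{n+q})\bigr)$, which is weakly contractible because both spaces retract onto the bundle of $n$-frames in $TS^{n+q}$ via the $1$-jet at the center; one also needs Smale--Hirsch to identify $Imm_\partial(D^n,D^{n+q})$ with the space of immersions of $S^n$ standard on $D^n_-$. None of this appears in your sketch, and it is the actual content of the step.

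For the framed-versus-unframed step, your attribution of the work to the orientation convention is a misdirection: framings of the normal bundle of a fixed embedding form a torsor over $\mathrm{Map}(S^n,SO(q))$, which has many components, so the framing is not ``essentially unique'' and the orientation convention merely selects components. The correct mechanism --- which you gesture at with ``the framing data cancels in the homotopy fiber'' --- is that the square formed by $Emb^{fr}$, $Imm^{fr}$, $Emb$, $Imm$ is a homotopy pullback (a framed embedding is literally an embedding together with a framing of the underlying immersion, and $Imm^{fr}\rightarrow Imm$ is a fibration), so the horizontal homotopy fibers agree; this is the content of \cite[Proposition~1.2]{song}. I would recommend either supplying these two arguments in full or citing the references as the paper does; as written, the two steps that carry the real weight of the lemma are asserted rather than proved.
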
 

\begin{proof} Let us first prove the \enquote{non-framed} case $$\pi_0\overline{Emb}(S^n,S^{n+q})= \pi_0\overline{Emb}(S^n,\mathbb{R}^{n+q}).$$

Consider the following diagram, where the horizontal lines are fiber sequences:\\
\begin{center}
\begin{tikzcd}
\overline{Emb}(S^n,\mathbb{R}^{n+q}) \arrow[r] \arrow[d] & Emb(S^n,\mathbb{R}^{n+q}) \arrow[r] \arrow[d]  & Imm(S^n,\mathbb{R}^{n+q}) \arrow[d] \\
\overline{Emb}(S^n,S^{n+q})\arrow[r]          & Emb(S^n,S^{n+q}) \arrow[r]                                         & Imm(S^n,S^{n+q})
\end{tikzcd}
\end{center}

\vspace{0.2cm}

We view $\mathbb{R}^{n+q}=S^{n+q}-\infty$, where the \enquote{infinity} point is of codimension $n+q$. Given an embedding (resp. immersion) from $S^n$ to $S^{n+q}$, we can perturb it slightly in a way that it misses the \enquote{infinity} point as $q\geq 3$, so that we get an embedding (resp. immersion) from $S^n$ to $\mathbb{R}^{n+q}$. Therefore, the second (resp. third) vertical map is surjective on the level of $\pi_0$. For injectivity, note that an isotopy (resp. regular homotopy) of an embedding (resp. immersion) of $S^n$ in $S^{n+q}$ is $(n+1)$-dimensional, while the \enquote{infinity} point has codimension $n+q$, so it can still miss the point given $q\geq 3$, and therefore the second (resp. third) vertical map is bijective on $\pi_0$. The same argument holds for $\pi_1$ because $q\geq 3$. Therefore, the second and third vertical maps induce isomorphisms on $\pi_0$ and $\pi_1$ when $q\geq 3$. By five lemma, we get
$$\pi_0\overline{Emb}(S^n,S^{n+q})= \pi_0\overline{Emb}(S^n,\mathbb{R}^{n+q}).$$
It is proved in \cite[Theorem 1.1]{vt3} that $$\pi_0\overline{Emb}(S^n,\mathbb{R}^{n+q})=\pi_0\overline{Emb}_{\partial}(D^n,D^{n+q}).$$
Using the argument from \cite[Proposition 1.2]{song}, we have that the following natural projections are weak equivalences:
$$\overline{Emb}_\partial^{fr}(D^n,D^{n+q})\rightarrow \overline{Emb}_\partial(D^n,D^{n+q}),$$  $$\overline{Emb}^{fr}(S^n,\mathbb{R}^{n+q})\rightarrow \overline{Emb}(S^n,\mathbb{R}^{n+q}).$$
Similarly, one can show that $\overline{Emb}^{fr}(S^n,S^{n+q})\rightarrow \overline{Emb}(S^n,S^{n+q})$ is a weak equivalence. Thus, we get different representations for $\pi_0\overline{Emb}_{\partial}(D^n,D^{n+q})$ as in (\ref{eq1}).

\end{proof}

By Smale-Hirsch theory \cite{hir, sm}, we have $Imm_{\partial}^{fr}(D^n,D^{n+q})\simeq \Omega^n SO(n+q)$, and since we consider the ambient dimension tend to infinity, we get $Imm_{\partial}^{fr}(D^n,D^{n+\infty})\simeq \Omega^n SO$. Note that $Emb_{\partial}^{fr}(D^n,D^{n+N})$ is an open, dense subset of $Imm_{\partial}^{fr}(D^n,D^{n+N})$ of codimension $N-n$. As $N$ gets large, the inclusion $Emb_{\partial}^{fr}(D^n,D^{n+N})\hookrightarrow Imm_{\partial}^{fr}(D^n,D^{n+N})$ becomes highly connected, and we get $Emb_{\partial}^{fr}(D^n,D^{n+\infty})\simeq Imm_{\partial}^{fr}(D^n,D^{n+\infty})\simeq \Omega^n SO$.\\

\begin{lemma}\thlabel{lm} \textit{For} $q\geq 3$, 
$$\pi_{0}\overline{Emb}_{\partial}^{fr}(D^n,D^{n+q})=\pi_{0}\text{hofib}(Emb_{\partial}^{fr}(D^n,D^{n+q})\rightarrow Emb_{\partial}^{fr}(D^n,D^{n+\infty})).$$

\end{lemma}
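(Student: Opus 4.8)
The plan is to realize both sides of the claimed equality as the total homotopy fiber of one commutative square, using the two facts recorded immediately above the statement: by Smale--Hirsch, $Imm_{\partial}^{fr}(D^n,D^{n+q})\simeq \Omega^n SO(n+q)$, and the stabilized inclusion $Emb_{\partial}^{fr}(D^n,D^{n+\infty})\to Imm_{\partial}^{fr}(D^n,D^{n+\infty})$ is a weak equivalence (both spaces being $\Omega^n SO$). First I would form the square
\begin{center}
\begin{tikzcd}
Emb_{\partial}^{fr}(D^n,D^{n+q}) \arrow[r] \arrow[d] & Imm_{\partial}^{fr}(D^n,D^{n+q}) \arrow[d] \\
Emb_{\partial}^{fr}(D^n,D^{n+\infty}) \arrow[r] & Imm_{\partial}^{fr}(D^n,D^{n+\infty})
\end{tikzcd}
\end{center}
whose rows are the inclusions of embeddings into immersions and whose columns are the stabilization (suspension) maps; write $i,j$ for the top and bottom rows and $s_E,s_I$ for the left and right columns. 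I would then compute its total homotopy fiber $T$ in the two standard ways.

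Taking homotopy fibers of the rows first, $\mathrm{hofib}(i)=\overline{Emb}_{\partial}^{fr}(D^n,D^{n+q})$ by definition, while $j$ is a weak equivalence so $\mathrm{hofib}(j)\simeq\ast$; hence $T\simeq\overline{Emb}_{\partial}^{fr}(D^n,D^{n+q})$ and in particular $\pi_0 T=\pi_0\overline{Emb}_{\partial}^{fr}(D^n,D^{n+q})$, the left-hand side of the lemma. Taking homotopy fibers of the columns first, $T=\mathrm{hofib}\bigl(\mathrm{hofib}(s_E)\to\mathrm{hofib}(s_I)\bigr)$, so there is a fibration sequence
\[
T\longrightarrow \mathrm{hofib}(s_E)\longrightarrow \mathrm{hofib}(s_I),
\]
in which $\mathrm{hofib}(s_E)=\mathrm{hofib}\bigl(Emb_{\partial}^{fr}(D^n,D^{n+q})\to Emb_{\partial}^{fr}(D^n,D^{n+\infty})\bigr)$ is exactly the space on the right-hand side of the lemma.

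It then remains to show that the base $\mathrm{hofib}(s_I)$ of this fibration is irrelevant in the lowest degrees. By naturality of Smale--Hirsch, $s_I$ is identified with $\Omega^n$ of the inclusion $SO(n+q)\hookrightarrow SO$, whence
\[
\mathrm{hofib}(s_I)\simeq \Omega^{n}\,\mathrm{hofib}\bigl(SO(n+q)\hookrightarrow SO\bigr)\simeq \Omega^{n+1}\bigl(SO/SO(n+q)\bigr).
\]
Since $SO/SO(n+q)$ is a colimit of Stiefel manifolds $V_k(\mathbb{R}^{n+q+k})$ and is therefore $(n+q-1)$-connected, the space $\mathrm{hofib}(s_I)$ is $(q-2)$-connected; for $q\geq 3$ it is in particular simply connected, so $\pi_0\,\mathrm{hofib}(s_I)=\ast$ and $\pi_1\,\mathrm{hofib}(s_I)=0$. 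Feeding this into the long exact sequence of the fibration above, the vanishing of these two groups makes $\pi_0 T\to\pi_0\,\mathrm{hofib}(s_E)$ a bijection of pointed sets (simple-connectivity of the base trivializing the $\pi_1$-action that could otherwise obstruct injectivity). Combined with $\pi_0 T=\pi_0\overline{Emb}_{\partial}^{fr}(D^n,D^{n+q})$, this yields the lemma.

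The step I expect to be the main obstacle is the connectivity bookkeeping for $\mathrm{hofib}(s_I)$: once the equivalence $\mathrm{hofib}(s_I)\simeq\Omega^{n+1}(SO/SO(n+q))$ is secured, the $(n+q-1)$-connectivity of the Stiefel colimit is precisely what converts the codimension hypothesis $q\geq 3$ into the vanishing of $\pi_0$ and $\pi_1$ needed to finish the comparison; the total-fiber manipulation itself is formal.
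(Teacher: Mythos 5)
Your proof is correct and rests on exactly the same inputs as the paper's: Smale--Hirsch, the stability $\pi_i SO(n+q)=\pi_i SO$ for $i\leq n+q-2$ (equivalently the $(n+q-1)$-connectivity of $SO/SO(n+q)$), and the equivalence $Emb_{\partial}^{fr}(D^n,D^{n+\infty})\simeq Imm_{\partial}^{fr}(D^n,D^{n+\infty})$. The only difference is packaging --- you organize the comparison as a total-homotopy-fiber of a square, where the paper applies the five lemma directly to the fiber sequences over $\Omega^n SO(n+q)$ and $\Omega^n SO$ --- so this is essentially the paper's argument.
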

\begin{proof}
By definition, $\pi_0\overline{Emb}_{\partial}^{fr}(D^n,D^{n+q})$ is equal to $\pi_0$hofib$(Emb_{\partial}^{fr}(D^n,D^{n+q})\rightarrow Imm_{\partial}^{fr}(D^n,D^{n+q})\simeq \Omega^n SO(n+q))$, which is isomorphic to $\pi_0$hofib$(Emb_{\partial}^{fr}(D^n,D^{n+q})\rightarrow \Omega^n SO)$ using the stability of the homotopy groups of $SO$: $$\pi_{i}SO(n+q)=\pi_{i}SO, \text{   \hspace{0.2cm}     if   } i\leq n+q-2.$$ Therefore, for $q\geq 3$, we have that $\pi_{0}\Omega^n SO(n+q)=\pi_{n} SO(n+q)=\pi_{n}SO=\pi_{0} \Omega^n SO$ and similarly $\pi_{1}\Omega^n SO(n+q)=\pi_{1} \Omega^n SO$. Since $ \Omega^n SO \simeq Emb_{\partial}^{fr}(D^n,D^{n+\infty})$, we get the result as a consequence of five lemma.

\end{proof}

Thus, for any element $[(f,\alpha)]$ in $\pi_{0}\overline{Emb}_{\partial}^{fr}(D^n,D^{n+q})$ there corresponds an equivalence class of a pair $(\tilde{f},\tilde{\alpha})$ where $\tilde{f}:D^n\hookrightarrow D^{n+q}$ and $\tilde{\alpha}:[0,1]\rightarrow Emb_{\partial}^{fr}(D^n,D^{n+\infty})$ i.e, $\tilde{\alpha}: D^n\times [0,1]\hookrightarrow D^{n+N}\times [0,1]$ such that $\tilde{\alpha}|_{D^n\times 0}=id$ and $\tilde{\alpha}|_{D^n\times 1}=\tilde{f}$, together with framing.\\

We consider $D^{n+1}\cong D^n\times [0,1]$ obtained by identifying $D_{+}^n$ to $D^n\times \{1\}$ and $D_{-}^n$ to $D^n\times \{0\}\cup S^{n-1}\times [0,1]$, and then smoothening the corners. We similarly identify $D^{n+N+1}\cong D^{n+N}\times [0,1]$, for some large $N$. Therefore, each pair $(\tilde{f},\tilde{\alpha})$ can be thought of as a special disked embedding with Type~III framing i.e., a pair $(id \cup\tilde{f},\tilde{\alpha}):(S^n,D^{n+1})\hookrightarrow (S^{n+q},D^{n+N+1})$ such that $\tilde{\alpha}|_{D_{-}^n}$ is the trivial inclusion $id:D_{-}^n\hookrightarrow D_{-}^{n+q}$ with trivial framing, and $\tilde{\alpha}|_{D_{+}^n}$ is the framed embedding $\tilde{f}:D_{+}^n\hookrightarrow D_{+}^{n+q}$. In other words, one has a natural map 
\begin{equation}\label{mu}
\mu:\text{ hofib}(Emb_{\partial}^{fr}(D^n,D^{n+q})\rightarrow Emb_{\partial}^{fr}(D^n,D^{n+\infty})) \longrightarrow  SDE_n^q.\\
\end{equation}

On the level of $\pi_0$, we obtain $$\mu_*: \pi_0\overline{Emb}_{\partial}^{fr}(D^n,D^{n+q})\rightarrow \overline{FC}{}_n^q,$$ $$[(f,\alpha)]\mapsto [(id\cup\tilde{f},\tilde{\alpha})].$$

\begin{theorem}\thlabel{cor} \textit{For $q\geq 3$, $\mu_*$ is an isomorphism, and therefore}
$$ \pi_0\overline{Emb}_{\partial}^{fr}(D^n,D^{n+q})= \overline{FC}{}_n^q.$$
\end{theorem}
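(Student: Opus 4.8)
The plan is to prove that $\mu_*$ is a bijection on $\pi_0$ by recognizing $\mu$ as the change of coordinates $D^{n+1}\cong D^n\times[0,1]$ (together with $D^{n+N+1}\cong D^{n+N}\times[0,1]$) already used to define it, and then constructing its inverse by slicing. The guiding principle is that a path $\tilde\alpha$ in $Emb_\partial^{fr}(D^n,D^{n+\infty})$ from $\tilde f$ to the trivial embedding is, after this reparametrization, precisely a disk extension $\bar f\colon D^{n+1}\hookrightarrow D^{n+1+\infty}$ that is \emph{level-preserving}, i.e.\ carries each slice $D^n\times\{t\}$ into $D^{n+N}\times\{t\}$. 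Moreover, the Type~III boundary condition on the framing of $\bar f$ matches the two endpoint conditions on $\tilde\alpha$: the trivial framing along $D_-^n\supset D^n\times\{0\}$ (and along $S^{n-1}\times[0,1]$) corresponds to $\tilde\alpha(0)=id$ with the standard boundary behavior, while the suspension framing along $D_+^n=D^n\times\{1\}$ corresponds to $\tilde\alpha(1)=\tilde f$ with its framing. Thus $\mu$ is a homeomorphism onto the subspace of $SDE_n^q$ of level-preserving special disked embeddings, and the whole content of the theorem is that this subspace meets every component of $SDE_n^q$ and separates its components correctly.

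For surjectivity of $\mu_*$ I would take an arbitrary special disked embedding $(f,\bar f)$ with Type~III framing and isotope it, through such embeddings, to a level-preserving one. Composing $\bar f$ with the projection $D^{n+N+1}\cong D^{n+N}\times[0,1]\to[0,1]$ yields a function on $D^{n+1}\cong D^n\times[0,1]$; since we work in $D^{n+1+\infty}$, the codimension $N-n$ is arbitrarily large, and a general-position argument lets one perturb $\bar f$ rel the relevant boundary so that this function becomes the projection $(x,t)\mapsto t$. Integrating the straightening isotopy keeps the Type~III framing and leaves $f$ in its concordance class, so the straightened representative lies in the image of $\mu$ and $\mu_*$ is onto.

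For injectivity, suppose $(\tilde f_0,\tilde\alpha_0)$ and $(\tilde f_1,\tilde\alpha_1)$ map under $\mu$ to concordant special disked embeddings, and let $F\colon D^{n+1}\times[0,1]\hookrightarrow D^{n+N'+1}\times[0,1]$ be a Type~III concordance between them, trivial along $D_-^n$. Applying the same straightening to $F$ in the reparametrized $D^{n+1}$-direction makes it level-preserving there, so slicing produces a continuous family of paths in $Emb_\partial^{fr}(D^n,D^{n+\infty})$, i.e.\ a path in $\mathrm{hofib}\bigl(Emb_\partial^{fr}(D^n,D^{n+q})\to Emb_\partial^{fr}(D^n,D^{n+\infty})\bigr)$ joining $(\tilde f_0,\tilde\alpha_0)$ to $(\tilde f_1,\tilde\alpha_1)$. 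By Lemma~\ref{lm} these represent the same element of $\pi_0\overline{Emb}_\partial^{fr}(D^n,D^{n+q})$, so $\mu_*$ is injective; together with surjectivity this gives the isomorphism and the asserted identity $\pi_0\overline{Emb}_\partial^{fr}(D^n,D^{n+q})=\overline{FC}_n^q$.

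The main obstacle is the straightening step shared by both halves: upgrading a general special disked embedding, and a general concordance between two level-preserving ones, to something level-preserving without disturbing the Type~III framing or the class of the boundary knot. This is exactly where passage to infinite codimension $D^{n+1+\infty}$ is essential, since it makes the ambient spaces of embeddings highly connected and the general-position straightening available; the hypothesis $q\ge 3$ itself enters only through Lemma~\ref{lm} (the reduction of the immersion fiber to $\Omega^n SO$), while the straightening is a stable-range phenomenon.
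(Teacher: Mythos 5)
Your global strategy---identify the image of $\mu$ with the level-preserving special disked embeddings and reduce the theorem to a ``straightening'' statement, then conclude via Lemma~\ref{lm}---is a hands-on reformulation of what the paper does by comparing two fibration sequences over $Emb_{\partial}^{fr}(D^n,D^{n+q})$. But the step you delegate to ``a general-position argument'' is precisely the non-trivial content, and general position does not deliver it. A generic perturbation of $\bar f$ makes the height function $\pi\circ\bar f$ on $D^{n+1}\cong D^n\times[0,1]$ Morse, not equal to the projection $(x,t)\mapsto t$; an interior critical point of the height function (say a local maximum of the last coordinate) cannot be removed by a small perturbation in any codimension, so ``the codimension $N-n$ is arbitrarily large'' does not by itself straighten anything. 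What your surjectivity and injectivity steps actually require is that the level-preserving inclusion $\Omega Emb_{\partial}^{fr}(D^n,D^{n+\infty})\rightarrow Emb_{\partial}^{fr}(D^{n+1},D^{n+1+\infty})$ (and its relative versions) be a weak equivalence. The paper obtains exactly this from Smale--Hirsch theory: in infinite codimension $Emb_{\partial}^{fr}\simeq Imm_{\partial}^{fr}\simeq\Omega^{k}SO$, the map in question becomes the standard equivalence $\Omega\Omega^{n}SO\simeq\Omega^{n+1}SO$, and the theorem follows by comparing the fibration $SDE_n^q\rightarrow Emb_{\partial}^{fr}(D^n,D^{n+q})$ with the homotopy-fiber fibration and invoking Lemma~\ref{lm}. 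You need to replace the general-position appeal by this (or an equivalent) argument.

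There is a second, smaller gap in your surjectivity step: before straightening you must know that a level-preserving framed extension of $\tilde f$ exists at all, i.e.\ that $\tilde f$ lies in the base-point component of $Emb_{\partial}^{fr}(D^n,D^{n+\infty})\simeq\Omega^{n}SO$, whose $\pi_0=\pi_n SO$ is nonzero in general. This does follow from the existence of the given (non-level-preserving) extension $\bar f$ with Type~III framing, since that extension kills the framing obstruction in $\pi_n SO$, but the deduction again runs through the Smale--Hirsch identification rather than general position, and your write-up does not address it.
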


\begin{proof}
To show that $\mu_*$ is bijective, it suffices to show that hofib$(Emb_{\partial}^{fr}(D^n,D^{n+q})\rightarrow Emb_{\partial}^{fr}(D^n,D^{n+\infty}))$ and $SDE_n^q$ are weakly homotopy equivalent, and then Lemma~\ref{lm} concludes the result.\\

Consider the following diagram, where the vertical lines are fiber sequences, and the map $\mu$ is defined above (\ref{mu}).

\begin{center}
\begin{tikzcd}[column sep=-0.5em]
 \Omega Emb_{\partial}^{fr}(D^n,D^{n+\infty}) \arrow[r] \arrow[d] & Emb_{\partial}^{fr}(D^{n+1},D^{n+1+\infty})  \arrow[d] \\
  \text{hofib}(Emb_{\partial}^{fr}(D^n,D^{n+q})\rightarrow Emb_{\partial}^{fr}(D^n,D^{n+\infty})) \arrow[r, "\mu"]  \arrow[d]  & SDE_n^q    \arrow[d] \\
Emb_{\partial}^{fr}(D^n,D^{n+q}) \arrow[r, equal] & Emb_{\partial}^{fr}(D^n,D^{n+q}) \\
\end{tikzcd}
\end{center}
Note that the top map is just restriction on the fibers. Moreover, it is a homotopy equivalence since $Emb_{\partial}^{fr}(D^{n+1},D^{n+1+\infty})\simeq Imm_{\partial}^{fr}(D^{n+1},D^{n+1+\infty})\simeq \Omega^{n+1}SO\simeq \Omega \Omega^{n}SO\simeq \Omega Emb_{\partial}^{fr}(D^n,D^{n+\infty})$. Thus, the map in the middle $\mu$ is also a weak homotopy equivalence. By Lemma~\ref{lm}, we get $ \pi_0\overline{Emb}_{\partial}^{fr}(D^n,D^{n+q})= \overline{FC}{}_n^q$.

\end{proof}

Theorem \ref{mth1} is immediate by combining Lemma \ref{lmm} and Theorems~\ref{mt} and~\ref{cor}.

\section{Geometric interpretation of long exact sequences associated with the triad $(SG;SO,SG_q)$}\label{sec4}

In this section, we prove Theorem \ref{mth2} for the \enquote{non-framed} case i.e., we show the isomorphism between the sequences in (\ref{*}). The proof for the framed case is similar.\\

Recall that $Im_n^q$ is the group of concordance (or equivalently regular homotopy) classes of immersions of $S^n$ in $S^{n+q}$. According to Haefliger \cite[\S 4]{hae1}, any representative in $Im_n^q$ is regular homotopic to a special immersion i.e., an immersion $f:S^n\looparrowright S^{n+q}$ such that $f|_{D_{-}^n}$ is the natural inclusion in $D_{-}^{n+q}$ and $f|_{D_{+}^n}$ is an immersion in $D_{+}^{n+q}$. We can extend this immersion as a disk immersion $\bar{f}:D^{n+1}\looparrowright D^{n+N+1}$ for $N$ large enough. Furthermore, we add framing on $\bar{f}$ by first extending the framing from the base-point $*=e_2$ to $D_{+}^n$ inside $D_{+}^{n+q}$, and then we extend this framing to $D^{n+1}$ inside $D^{n+N+1}$ after taking the suspension. In other words, we add disk structure and Type~I framing in the same way as we did for special embeddings representing elements in $C_n^q$. Thus, any element in $Im_n^q$ can be represented by a special disked immersion $(f,\bar{f}):(S^n,D^{n+1})\looparrowright (S^{n+q},D^{n+N+1})$ with Type~I framing. \\

Haefliger \cite[\S 4.2]{hae} has shown that $Im_n^q$ is isomorphic to the homotopy group $\pi_n(SO,SO_q)$ where his map $\eta:Im_n^q\rightarrow \pi_n(SO,SO_q)$ is defined as follows: given a special disked immersion $(f,\bar{f}):(S^n,D^{n+1})\looparrowright (S^{n+q},D^{n+N+1})$ with Type I framing, one considers the trivialization of the normal bundle induced by the framing of $\bar{f}$. To each $x\in S^n$, one associates the $(N-q)$ frame $e_{n+q+2},..., e_{N+n+1}$ with respect to this trivialization. This $(N-q)$ frame defines a map $h_{f}:S^n\rightarrow V_{N,N-q}$ that represents a homotopy class $[h_f]$ in $\pi_n(V_{N,N-q})=\pi_n(SO,SO_q)$, where $V_{N,N-q}=SO_N/SO_{N-q}$ is the Stiefel manifold.

\begin{remark}\label{rmk}
Note that $h_{f}|_{D_{+}^n}$ is constantly equal to the identity inclusion $\mathbb{R}^{N-q}\subset \mathbb{R}^N$ (viewed as the base-point of $V_{N,N-q}$) because the framing on $D_{+}^n$ is given by suspension and the last $(N-q)$ vectors are $e_{n+q+2},..., e_{N+n+1}$. Hence, the class $[h_{f}]$ depends only on the framing at~$D_{-}^n$. 
\end{remark}

Let us now describe the map $\theta$ appearing in the geometric long exact sequence:
\begin{equation} 
\longrightarrow \overline{FC}{}_n^q\longrightarrow C_n^q\longrightarrow Im_n^q{\overset{\theta}{\longrightarrow}} \overline{FC}{}_{n-1}^q \longrightarrow 
\end{equation}
\vspace{0.01cm}

Note that $Im_n^q=\pi_0Imm_{\partial}(D^n, D^{n+q})=\pi_n V_{n+q,n}=\pi_1Imm_{\partial}(D^{n-1}, D^{n+q-1})$. The natural map $\Omega Imm_{\partial}(D^{n-1},D^{n+q-1})\rightarrow \overline{Emb}_{\partial}(D^{n-1},D^{n+q-1})$ induces a map $Im_n^q=\pi_1Imm_{\partial}(D^{n-1},D^{n+q-1}) \rightarrow \pi_0\overline{Emb}_{\partial}(D^{n-1},D^{n+q-1})=\overline{FC}_{n-1}^q$.  \\

We can also interpret $\theta:Im_n^q\rightarrow \overline{FC}{}_{n-1}^q$ in terms of disked embeddings/immersions as follows: given a special disked immersion $(f,\bar{f}):(S^n,D^{n+1})\looparrowright (S^{n+q}, D^{n+N+1})$ with Type~I framing representing an element in $Im_n^q$, we consider the restriction $f|_{S^{n-1}=D_{-}^n\cap D_{+}^n }=g=id: S^{n-1}\hookrightarrow S^{n+q-1}$, which is the natural inclusion. Moreover, we get the disk immersion $f|_{D_{+}^n}:D_{+}^{n}\looparrowright D_{+}^{n+q}$, which can be immersed inside a bigger disk $D^{n+N}_{+}$ by allowing more dimensions. As a result, we obtain a disk immersion $\bar{g}:=id\circ f|_{D^n_{+}}:D^n_{+}\looparrowright D^{n+N}_{+}$ with the restricted framing from $\bar{f}|_{D^n_{+}}$. Since $N$ is large enough, we can change the framed immersion $\bar{g}$ into a framed embedding $\bar{g}':D^{n}\hookrightarrow D^{n+N}$. The obtained pair $(g,\bar{g}'):(S^{n-1}, D^n)\hookrightarrow (S^{n+q-1},D^{n+N})$ is a disked embedding where the framing on $\bar{g}'|_{S^{n-1}}$ is given by suspension of a framing inside $S^{n+q-1}$ i.e., $(g,\bar{g}')$ is a framed disked embedding. Therefore, given a special disked immersion $(f,\bar{f})$ with Type~I framing, we can assign a framed disked embedding $(g,\bar{g}')$ to it. Thus, we get a well defined map from $Im_n^q$ to $\overline{FC}{}_{n-1}^q$. \\

The commutativity of the following diagram is given by a similar argument as in the proof of Theorem~\ref{cor}.

\begin{center}
\begin{tikzcd}
Im_n^q \arrow[r,"\theta"] & \overline{FC}{}_{n-1}^q  \\
\pi_1Imm(D^{n-1},D^{n+q-1})\arrow[r] \arrow[u, "\simeq"]         & \pi_0\overline{Emb}_{\partial}(D^{n-1},D^{n+q-1}) \arrow[u, "\simeq"]
\end{tikzcd}
\end{center}

\begin{remark}\label{rem}
Note that while defining $\theta$, instead of $(g,\bar{g})=(id,id\circ f|_{D^n_{+}}):(S^{n-1}, D^n)\looparrowright (S^{n+q-1},D^{n+N})$, we can consider the pair $(id,id \circ f|_{D^n_{-}})$ which is same as $(id, id)$ since $f|_{D^n_{-}}=id$ with framing restricted from $\bar{f}$ (such framing may not be a suspension). In $\overline{FC}{}_{n-1}^q$, the representative $(g,\bar{g}')$ corresponding to $(g,\bar{g})=(id, id\circ f|_{D^n_{+}})$ is equivalent to the framed trivial disked embedding $(id,id)=(id,id\circ f|_{D^n_{-}})$ with framing as on $\bar{f}|_{D^n_{-}}$, since $\bar{f}$ acts as a concordance between $id\circ f|_{D^n_{+}}$ and $id\circ f|_{D^n_{-}}$. To be precise, we take a perturbation $\bar{f}'$ of $\bar{f}$ which acts as a concordance. We will use this in the following proof to show commutativity of the third square in (\ref{five}).\\
\end{remark}

\begin{proof}[\textbf{Proof of Theorem \ref{mth2}}] To prove the result, we need to show that each square in the following diagram commutes:
\small
\begin{equation}\label{five}
\begin{tikzcd}[column sep=tiny]
 \pi_{n+1}(SG,SG_q) \arrow[r] & \pi_{n+1}(SG;SO,SG_q) \arrow[r] & \pi_n(SO,SO_q)  \arrow[r] & \pi_n(SG,SG_q) \\
 \overline{FC}{}_n^q \arrow[r]  \arrow[u,"\xi", "\simeq" '] & C_n^q \arrow[r]  \arrow[u, "\psi", "\simeq" '] &  Im_n^q  \arrow[r, "\theta"] \arrow[u, "\eta", "\simeq" ']  & \overline{FC}{}_{n-1}^q \arrow[u, "\xi", "\simeq" '] 
\end{tikzcd}
\end{equation}
\normalsize

For the first square, the map $\overline{FC}{}_n^q \rightarrow C_n^q$ is an inclusion on the level of representatives i.e., a framed disked embedding representing an element in $\overline{FC}{}_n^q$ clearly represents an element in $C_n^q$. Therefore, the commutativity of this square is straightforward from the construction. \\

The commutativity of the second square is given by Haefliger \cite[\S 4.4]{hae} and is easy to see. The map $C_n^q\rightarrow Im_n^q$ is obvious since an embedding is also an immersion. We have seen that the vertical map $\eta$ on a given representative in $Im_n^q$ depends only on the behavior of the representative on $D_{-}^n$, see Remark~\ref{rmk}. Similarly, the top horizontal map $\pi_{n+1}(SG;SO,SG_q) \rightarrow \pi_n(SO,SO_q)$ is defined by restricting the representatives in $\pi_{n+1}(SG;SO,SG_q)$ to the half-disk $D_{-}^n$. \\

We now check the commutativity of the third square. Given an element $\alpha\in Im_n^q$ represented by a special disked immersion $(f,\bar{f}):(S^n,D^{n+1})\looparrowright (S^{n+q}, D^{n+N+1})$ with Type~I framing, by Remark~\ref{rem}, the corresponding element $\theta(\alpha)$ in $\overline{FC}{}_{n-1}^q$ can be represented by the framed trivial disked embedding $(id,id)=(id,id\circ f|_{D^n_{-}}):(S^{n-1},D^n)\hookrightarrow (S^{n-1}\times D^q, D^{n}\times D^N)$, with the framing obtained as a restriction $\bar{f}|_{D^n_{-}}$. Recall that on $S^{n-1}=D_{-}^n\cap D_{+}^n$, the framing is given by suspension of a framing inside $S^{n+q-1}$. We can homotope the obtained framing on $S^{n-1}$ so that it becomes trivial on $D^{n-1}_{-}$. Now, under the vertical map $\xi$, the image of $\theta(\alpha)$ is represented by a map $\phi:D^n\times S^{N-1}\rightarrow S^{N-1}$  with an extension $\bar{\phi}:D^n\times D^N\rightarrow D^N$ defined linearly by $(x,y)\mapsto r(x)(y)$, for some rotation $r$ given by the framing on $\bar{f}|_{D^n_{-}}$. More precisely, $r:D^n\rightarrow SO(N)$ is such that $r|_{\partial D^n=S^{n-1}}$ is a suspension of rotation in $SO(q)$ with $r=id$ on $D^{n-1}_{-}$, by construction. The map $\bar{\phi}$ satisfies the definition of $\xi$ (see subsection \ref{sub3}), since $\bar{\phi}^{-1}(0)=\bar{f}(D^n_{-})=D^n\times 0$, with $\bar{\phi}|_{S^{n-1}}\in SO(q)$ such that $\bar{\phi}_x=id$ for $x\in D^{n-1}_{-}$ and $\bar{\phi}_x$ is the suspension of a map $D^q\rightarrow D^q$ for any $x\in D^{n-1}_{+}$. Moreover, $\bar{\phi}$ also represents an element in $\pi_n(SO,SO_q)$ and is precisely the representative that we get for $\eta(\alpha)$, as $\eta$ also depends only on the non-trivial framing on $D_{-}^n$ (see Remark~\ref{rmk}). Therefore, the square commutes.

\end{proof}

\section{Applications}\label{sec5}
 \subsection{Known computations}
For $C_n^q$, the well-known computations were done by Haefliger \cite{hae3, hae2, hae} in the 1960s and later by Milgram \cite{mil} in the early 1970s. To the best of our knowledge, no computations were done ever since. The Manifold Atlas webpage \cite{max} describes all the known groups $C_n^q$. Haefliger \cite{hae3} has shown that $C_n^q=0$ for $n<2q-3$. Furthermore, he proved that for $q\geq 3$ (see \cite[Corollary~8.14]{hae}),
 \begin{align*}
   C_{2q-3}^{q}= \left\{ \begin{array}{cc} 
            \mathbb{Z}  & \hspace{5mm}  \text{ \textit{q odd}} , \\
             \mathbb{Z}_2 & \hspace{5mm}  \text{ \textit{q even}}. \\
           \end{array} \right.
                                           \end{align*}
For odd $q$, the generator is given by the Haefliger trefoil knot \cite{hae2}. It is an interesting question whether the Haefliger trefoil is a generator for the even case.\\                                       
 
There are only a few computations for $FC_n^q$ in the literature. For example, Haefliger \cite[Theorem~5.17]{hae} has shown that $FC_3^3=\mathbb{Z}\oplus \mathbb{Z}$. Moreover, it is easy to see that for $n<2q-3$, we get $FC_n^q=\pi_n(SO_q)$, see Proposition~\ref{or}. \\ 

The groups $\overline{FC}{}_n^q = \pi_{n+1}(SG,SG_q)$ are related to the homotopy groups of spheres. Some of these groups are known, in particular, $\overline{FC}{}_{2}^{3}=\pi_3(SG,SG_3)=\mathbb{Z}_2$ and $\overline{FC}{}_3^3=\pi_{4}(SG,SG_3)=\mathbb{Z}$, found in \cite[\S 5.16]{hae} and \cite[Proof of Lemma~3.1]{skop}.\\

The rational computations of $\overline {FC}{}_n^q$ are known \cite[Corollary 20]{vt1}, \cite[\S 5.7]{vt2}, and can also be computed directly as follows:
\begin{prop}
\textit{For} $q\geq 3$, \begin{align*}
  \overline {FC}{}_n^q\otimes \mathbb{Q} = \left\{ \begin{array}{cc} 
            \mathbb{Q}  & \hspace{3mm}   n=q-1, \hspace{3mm} q \hspace{2mm}  even , \\
             \mathbb{Q} & \hspace{3mm}  n=2q-3, \hspace{2mm} q \hspace{2mm}odd, \\
               0  & \hspace{2mm} otherwise. \\
                              \end{array} \right.
                                           \end{align*}
\end{prop}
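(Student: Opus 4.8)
The plan is to run everything through the identification $\overline{FC}{}_n^q=\pi_{n+1}(SG,SG_q)$ furnished by Theorem~\ref{mt}, and then reduce to the (classical) rational homotopy of spheres via two long exact sequences. The first reduction uses that $SG$ is rationally trivial: $SG$ is the identity component of $QS^0=\Omega^\infty\Sigma^\infty S^0$, so $\pi_i(SG)=\pi_i^s$ for $i\geq 1$, and these stable stems are finite by Serre, whence $\pi_i(SG)\otimes\mathbb{Q}=0$ for all $i\geq 1$. Plugging this into the rationalized long exact sequence of the pair $(SG,SG_q)$ collapses it to
$$\overline{FC}{}_n^q\otimes\mathbb{Q}=\pi_{n+1}(SG,SG_q)\otimes\mathbb{Q}\cong\pi_n(SG_q)\otimes\mathbb{Q}\qquad(n\geq 1),$$
and both nontrivial values $n=q-1$ and $n=2q-3$ satisfy $n\geq 2$, so this applies.

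It then remains to compute $\pi_n(SG_q)\otimes\mathbb{Q}$. Since $SG_q=\mathrm{Map}_1(S^{q-1},S^{q-1})$ is the degree-one component of the free self-mapping space, I would use the evaluation fibration
$$\Omega^{q-1}_1 S^{q-1}\longrightarrow SG_q\xrightarrow{\ \mathrm{ev}\ }S^{q-1},$$
whose fibre is the degree-one component of $\Omega^{q-1}S^{q-1}$, so $\pi_n(\mathrm{fibre})=\pi_{n+q-1}(S^{q-1})$ for $n\geq 1$. Writing $m=q-1$, Serre's computation gives $\pi_*(S^m)\otimes\mathbb{Q}$ concentrated in degree $m$ when $m$ is odd, and in degrees $m$ and $2m-1$ when $m$ is even. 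The one remaining geometric input is the connecting homomorphism $\partial\colon\pi_n(S^m)\to\pi_{n-1}(\mathrm{fibre})=\pi_{n+m-1}(S^m)$, which (based at the identity map) is the Whitehead product $\alpha\mapsto[\iota_m,\alpha]$.

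I would then run the rationalized long exact sequence of this fibration in each parity. For $q$ even ($m$ odd), $\pi_*(S^m)_{\mathbb{Q}}$ is a single $\mathbb{Q}$ in degree $m$, the fibre contributes nothing in positive degrees, and $\pi_{2m-1}(S^m)_{\mathbb{Q}}=0$ forces $\partial=0$; hence $\pi_n(SG_q)_{\mathbb{Q}}=\mathbb{Q}$ exactly at $n=m=q-1$. For $q$ odd ($m$ even), the decisive point is that $[\iota_m,\iota_m]\in\pi_{2m-1}(S^m)$ is a rational generator, so $\partial$ is an isomorphism on the degree-$m$ class; this simultaneously kills the base class in $\pi_m(S^m)_{\mathbb{Q}}$ and the fibre class $\pi_{m-1}(\mathrm{fibre})=\pi_{2m-1}(S^m)_{\mathbb{Q}}$, leaving only the base class in $\pi_{2m-1}(S^m)_{\mathbb{Q}}$ (whose $\partial$ lands in a vanishing group). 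Thus $\pi_n(SG_q)_{\mathbb{Q}}=\mathbb{Q}$ exactly at $n=2m-1=2q-3$. Transporting these through the isomorphism of the first paragraph yields precisely the three cases in the statement.

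The main obstacle I expect is pinning down the connecting homomorphism as the Whitehead product with $\iota_m$ together with the fact that $[\iota_m,\iota_m]$ is rationally nonzero precisely in the even case. This single mechanism both removes the spurious $\mathbb{Q}$ in degree $q-1$ and produces the surviving class in degree $2q-3$ for odd $q$, so the whole parity dichotomy hinges on getting this map right; everything else is bookkeeping with the two long exact sequences and Serre's rational computations for spheres.
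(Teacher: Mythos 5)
Your proposal is correct and follows essentially the same route as the paper: identify $\overline{FC}{}_n^q$ with $\pi_{n+1}(SG,SG_q)$, use finiteness of the stable stems to collapse the rationalized long exact sequence of the pair to $\pi_n(SG_q)\otimes\mathbb{Q}$, and compute the latter via the evaluation fibration $\Omega_*^{q-1}S^{q-1}\to SG_q\to S^{q-1}$ with connecting map the Whitehead bracket $[\iota_{q-1},-]$. Your parity analysis via the rational (non)triviality of $[\iota_m,\iota_m]$ is exactly the mechanism the paper invokes implicitly when citing the rational homotopy groups of spheres.
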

\begin{proof}
Since $\overline {FC}{}_n^q=\pi_{n+1}(SG,SG_q)$, we consider the long exact sequence of the pair $(SG,SG_q)$:
\begin{equation}\label{1}
..\longrightarrow \pi_{n+1}(SG)\longrightarrow \pi_{n+1}(SG,SG_q)\longrightarrow \pi_n(SG_q)\longrightarrow \pi_n(SG)\longrightarrow.. 
\end{equation}

The rational homotopy groups $\pi_n^{\mathbb{Q}}(SG_q)$ can be easily computed by considering the long exact sequence associated with the fibration $\Omega_*^{q-1}S^{q-1}\rightarrow SG_q\rightarrow S^{q-1}$, where $\Omega_*^{q-1}S^{q-1}$ is the component of loops of degree one. The connecting homomorphism $\pi_{n+1}(S^{q-1})\rightarrow \pi_n(\Omega_*^{q-1}S^{q-1})=\pi_{n+q-1}(S^{q-1})$ is given by the Whitehead bracket $[id_{S^{q-1}},-]$. Note that all $\pi_n(SG)$ are torsions being the stable homotopy groups of spheres, hence, $\pi_n^{\mathbb{Q}}(SG)=0$. Using the rational homotopy groups of spheres, we get \begin{align*}
\pi_n^{\mathbb{Q}}(SG_q) = \left\{ \begin{array}{cc} 
          \mathbb{Q}  & \hspace{3mm}   n=q-1, \hspace{3mm} q \hspace{2mm}  even , \\
             \mathbb{Q} & \hspace{3mm}  n=2q-3, \hspace{2mm} q \hspace{2mm}odd, \\
               0  & \hspace{2mm} otherwise. 
                              \end{array} \right.
                                           \end{align*}

Thus, from the long exact sequence (\ref{1}) we get $\pi_{n+1}^{\mathbb{Q}}(SG,SG_q)=\pi_n^{\mathbb{Q}}(SG_q)$ and that concludes the result.
\end{proof}

\subsection{Metastable range}

From \cite[\S 4.4]{hae}, one can deduce the following stability result for the groups $\overline {FC}{}_{n}^q$ and $FC_n^q$:
\begin{prop}\thlabel{or}
\textit{For $n<2q-3$, $\overline{FC}{}_n^q=\pi_{n+1}(SO,SO_q)$ and $FC_n^q=\pi_nSO_q$.}
\end{prop}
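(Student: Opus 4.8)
The plan is to exploit the long exact sequences established in Theorem~\ref{mth2} together with the classical stability range for $C_n^q$ and $Im_n^q$. First I would invoke Haefliger's vanishing result $C_n^q=0$ for $n<2q-3$, which is recalled in \S\ref{sec5}. Feeding this into the geometric long exact sequence (\ref{*}), namely
\begin{equation*}
\cdots\longrightarrow \overline{FC}{}_n^q\longrightarrow C_n^q\longrightarrow Im_n^q{\overset{\theta}{\longrightarrow}} \overline{FC}{}_{n-1}^q \longrightarrow \cdots,
\end{equation*}
the term $C_n^q$ drops out in the relevant range and the sequence degenerates. The key observation is that the upper row of (\ref{*}) is the long exact sequence of the pair $(SG/SG_q,SO/SO_q)$, and under the isomorphisms $\xi$, $\psi$, $\eta$ the vanishing of $C_n^q$ forces $\overline{FC}{}_n^q$ to be computed by a shorter exact piece.

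Concretely, for the statement $\overline{FC}{}_n^q=\pi_{n+1}(SO,SO_q)$ when $n<2q-3$, I would read off from the exact sequence that the map $\overline{FC}{}_n^q\to C_n^q$ has trivial target, so $\overline{FC}{}_n^q$ injects into (indeed is identified with) the image of the connecting map coming from $Im_{n+1}^q=\pi_{n+1}(SO,SO_q)$. More precisely, I would use the fragment
\begin{equation*}
C_{n+1}^q\longrightarrow Im_{n+1}^q\overset{\theta}{\longrightarrow}\overline{FC}{}_n^q\longrightarrow C_n^q,
\end{equation*}
and observe that both flanking terms $C_{n+1}^q$ and $C_n^q$ vanish in this range (since $n+1\le 2q-3$ when $n<2q-3$, with care at the boundary index). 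This yields an isomorphism $\theta\colon Im_{n+1}^q\xrightarrow{\sim}\overline{FC}{}_n^q$, and since $Im_{n+1}^q=\pi_{n+1}(SO,SO_q)$ by Haefliger's identification $\eta$ recalled in \S\ref{sec4}, the first claim follows. For the framed statement $FC_n^q=\pi_nSO_q$, I would run the analogous argument on the framed sequence (\ref{**}), where $C_n^q$ is replaced by $FC_n^q$ mapping to $\pi_n(SO)$, and combine the vanishing of the embedding-modulo-immersion corrections with the fibration $SO_q\to SO\to SO/SO_q$ to strip the stabilization and land in $\pi_nSO_q$.

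The main obstacle I anticipate is bookkeeping at the boundary of the range: the naive argument needs $C_{n+1}^q=0$, which requires $n+1<2q-3$, i.e.\ $n<2q-4$, slightly stronger than the stated hypothesis $n<2q-3$. To handle the top index $n=2q-4$ I would either appeal directly to Haefliger's metastable computation in \cite[\S4.4]{hae} (which is precisely the reference cited), where the relevant connecting maps are analysed in this exact range, or argue that the obstruction term $C_{2q-3}^q$ maps into $Im$ in a way whose image is detected by a known suspension isomorphism, so that the identification of $\overline{FC}{}_n^q$ with $\pi_{n+1}(SO,SO_q)$ persists. The secondary technical point is to verify that the relevant arrows in (\ref{*}) and (\ref{**}) are exactly the maps induced by the fibration $SO_q\to SO$ and its delooping, so that the five-lemma-style comparison is legitimate; this is already guaranteed by Remarks~\ref{remar} and~\ref{remark}, which identify the upper rows with honest long exact sequences of pairs, so no new homotopy-theoretic input is required beyond careful index tracking.
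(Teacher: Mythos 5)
Your argument for the first equality is essentially the paper's: both run the fragment $C_{n+1}^q\to Im_{n+1}^q\to \overline{FC}{}_n^q\to C_n^q$ of the sequence (\ref{*}) (equivalently, of the triad sequence (\ref{2})) against Haefliger's vanishing $C_n^q=0$ for $n<2q-3$. You correctly flag the only delicate point, the top index $n=2q-4$, where $C_{2q-3}^q\neq 0$; but your proposed fix is vaguer than what is needed. The precise input is \cite[Corollary~6.10]{hae}: every embedding $S^n\hookrightarrow S^{n+q}$ with $n<2q-1$ is trivial as an immersion, so the map $C_{2q-3}^q\to Im_{2q-3}^q=\pi_{2q-3}(SO,SO_q)$ is identically zero, and exactness then still yields $\pi_{2q-3}(SO,SO_q)\cong\overline{FC}{}_{2q-4}^q$. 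Your phrase about the image being ``detected by a known suspension isomorphism'' does not pin this down; the map must be shown to vanish, not merely to be understood.

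For the framed equality the paper takes a different and shorter route: it uses Haefliger's geometric sequence $\pi_nSO_q\to FC_n^q\to C_n^q\to\pi_{n-1}SO_q$ from \cite[\S 5.9]{hae} (sequence (\ref{3})), in which $\pi_nSO_q$ already appears, so the vanishing of $C_n^q$ gives the claim at once for $n<2q-4$; at $n=2q-4$ the map $C_{2q-3}^q\to\pi_{2q-4}SO_q$ is zero because it factors through the zero map $C_{2q-3}^q\to\pi_{2q-3}(SO,SO_q)$ just established. Your route through (\ref{**}) can be made to work, but not as written: the ``embedding-modulo-immersion corrections'' $\overline{FC}{}_n^q=\pi_{n+1}(SO,SO_q)$ do \emph{not} vanish in this range (already for $n=q-1$ one has $\pi_q(SO,SO_q)\neq 0$), so you cannot simply drop them. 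What you actually need is a five-lemma comparison of the fragment $\pi_{n+1}(SO)\to\pi_{n+1}(SG,SG_q)\to FC_n^q\to\pi_n(SO)\to\pi_n(SG,SG_q)$ with the long exact sequence of the pair $(SO,SO_q)$, using the first equality and the naturality of the maps to identify $\pi_*(SG,SG_q)$ with $\pi_*(SO,SO_q)$; that comparison, not a vanishing statement, is what isolates $\pi_nSO_q$ in the middle, and it again requires separate care at the top index.
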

\begin{proof}

Consider the long exact sequence associated with the triad $(SG;SO,SG_q)$ given in \cite[\S 4.4]{hae}:
\small
\begin{equation}\label{2}
\rightarrow \pi_{n+1}(SG,SG_q)\rightarrow \pi_{n+1}(SG;SO,SG_q) \rightarrow \pi_n(SO,SO_q)  \rightarrow  \pi_n(SG,SG_q)\rightarrow
\end{equation}
\normalsize
By \cite[Corollary 6.6]{hae}, the groups $\pi_{n+1}(SG;SO,SG_q)=C_n^q=0$ for $n<2q-3$. Therefore, from the above sequence, we get $\pi_{n+1}(SO,SO_q)=\pi_{n+1}(SG,SG_q)=\overline {FC}{}_n^q$ for $n<2q-4$. Moreover, any element of $C_n^q$ is trivial as immersion for $n<2q-1$, see \cite[Corollary~6.10]{hae}. Thus, the homomorphism $\pi_{2q-2}(SG;SO,SG_q)\rightarrow \pi_{2q-3}(SO,SO_q)$ in (\ref{2}) is trivial, and we get $\pi_{2q-3}(SO,SO_q)=\pi_{2q-3}(SG,SG_q)=\overline{FC}{}_{2q-4}^q$.\\

For the second equality, we consider the geometric long exact sequence given by Haefliger \cite[\S 5.9]{hae}:
\begin{equation}\label{3}
\longrightarrow\pi_nSO_q\longrightarrow FC_n^q\longrightarrow C_n^q\longrightarrow \pi_{n-1}SO_q\longrightarrow 
\end{equation}
The result easily follows for $n<2q-4$ since $C_n^q=0$ for $n<2q-3$. When $n=2q-4$, the homomorphism $C_{2q-3}^q\rightarrow \pi_{2q-4}SO_q $ in (\ref{3}) is the composition $C_{2q-3}^q=\pi_{2q-2}(SG;SO,SG_q){\overset{0}\rightarrow} \pi_{2q-3}(SO,SO_q)\rightarrow \pi_{2q-4}(SO_q)$, and therefore is also trivial.

\end{proof}


\begin{lemma}\thlabel{le}
\textit{For $i\leq q-2$, $\pi_i(SG_q)=\pi_{i}(SG)$.}
\end{lemma}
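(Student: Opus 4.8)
The plan is to analyze the stabilization map $SG_q \to SG = \bigcup_q SG_q$ induced by suspension, and to show it is $(q-2)$-connected, which immediately yields $\pi_i(SG_q) = \pi_i(SG)$ for $i \leq q-2$. Recall that $SG_q$ is the space of degree-one self-maps of $S^{q-1}$, and suspension embeds $SG_q \hookrightarrow SG_{q+1}$ by sending $f \colon S^{q-1} \to S^{q-1}$ to its suspension $S(f)\colon S^q \to S^q$. The key input is the fibration
\begin{equation*}
\Omega^{q-1}_\ast S^{q-1} \longrightarrow SG_q \longrightarrow S^{q-1},
\end{equation*}
already used in the proof of the preceding Proposition, where the base records the image of the basepoint and the fiber $\Omega^{q-1}_\ast S^{q-1}$ is the degree-one component of the $(q-1)$-fold loop space of $S^{q-1}$. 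Since $\Omega^{q-1}_\ast S^{q-1} \simeq \Omega^{q-1} S^{q-1}$ as spaces, its homotopy is $\pi_j(\Omega^{q-1}_\ast S^{q-1}) = \pi_{j+q-1}(S^{q-1})$.

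First I would reduce the claim to a single connectivity statement: the inclusion $SG_q \hookrightarrow SG_{q+1}$ induces isomorphisms on $\pi_i$ for $i < q-2$ and a surjection for $i = q-2$; iterating over $q' \geq q$ and passing to the colimit $SG = \operatorname{colim}_{q'} SG_{q'}$ then gives $\pi_i(SG_q) = \pi_i(SG)$ for $i \leq q-2$. Concretely, one compares the fibration above for $SG_q$ with the one for $SG_{q+1}$ via the suspension maps on fiber, total space, and base, obtaining a map of long exact sequences. On the base, suspension $S^{q-1}\to S^q$ is trivial on homotopy in the relevant range but that is not quite the right comparison; the cleaner route is to use that $SG_q$ and $SG_{q+1}$ have the same low-dimensional homotopy because the connectivity of the suspension map is governed by the Freudenthal suspension theorem applied to the fiber $\Omega^{q-1}_\ast S^{q-1}$.

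Therefore the main technical step, and the one I expect to be the real obstacle, is to pin down the connectivity of $SG_q \to SG_{q+1}$ precisely. The standard fact is that $SG_q$ is an $H$-space (indeed a grouplike monoid under composition) whose stabilization agrees with that of $QS^0$ in a range, and that the pair $(SG, SG_q)$ is $(q-2)$-connected. I would establish this by the following comparison: the homotopy groups $\pi_i(SG_q)$ are, via the evaluation fibration, built from $\pi_i(S^{q-1})$ and $\pi_{i+q-1}(S^{q-1})$; as $q$ grows, both of these stabilize by Freudenthal once $i \leq q-2$, since then $i+q-1 \leq 2(q-1)-1 = 2q-3$ lies in the stable range for $\pi_\ast(S^{q-1})$, and the base contributes only through $\pi_i(S^{q-1})$ with $i \leq q-2 < q-1$, hence trivially. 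Matching these stable values with $\pi_i(SG) = \pi_i^s$ (the stable homotopy of $S^0$, since $SG \simeq \operatorname{colim} \Omega^{q-1}S^{q-1}$ restricted to degree-one components) completes the argument.

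I would record the conclusion as follows: for $i \leq q-2$ the suspension map on the fiber $\Omega^{q-1}_\ast S^{q-1} \to \Omega^{q}_\ast S^{q}$ is an isomorphism on $\pi_i$ by Freudenthal, and on the base the contribution vanishes in this range, so the five lemma applied to the map of long exact homotopy sequences of the two fibrations forces $\pi_i(SG_q) \xrightarrow{\ \cong\ } \pi_i(SG_{q+1})$. Passing to the colimit yields $\pi_i(SG_q) = \pi_i(SG)$ for $i \leq q-2$, which is the assertion. The one point requiring care is the boundary degree $i = q-2$, where Freudenthal gives only surjectivity on the fiber; I would handle this by noting that the base term $\pi_{q-2}(S^{q-1}) = 0$ removes the potential kernel, so the isomorphism persists at the top degree as claimed.
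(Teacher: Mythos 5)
Your route is genuinely different from the paper's. The paper deduces the lemma from its metastable-range proposition: Haefliger's vanishing $C_n^q=0$ for $n<2q-3$ forces $\pi_i(SG,SG_q)\cong\pi_i(SO,SO_q)$ in that range, the latter vanishes for $i<q$, and the claim then falls out of the long exact sequence of the pair $(SG,SG_q)$. You instead argue directly with the evaluation fibration $\Omega^{q-1}_\ast S^{q-1}\to SG_q\to S^{q-1}$ and the Freudenthal suspension theorem; this is the classical, self-contained computation and avoids the embedding-theoretic input entirely. For $i\le q-3$ your argument is complete: in the map of long exact sequences all four outer terms are zero or isomorphisms and the five lemma applies. (One small point: to make the comparison of fibrations literal you should base $S^q$ at an equatorial point, so that suspension commutes with evaluation; with a pole as basepoint the composite $SG_q\to SG_{q+1}\to S^q$ is constant and the diagram you want does not exist.)

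There is, however, a genuine gap at the boundary degree $i=q-2$, and the fix you propose does not close it. There the fiber map is $E\colon\pi_{2q-3}(S^{q-1})\to\pi_{2q-2}(S^{q})$, which sits exactly at the Freudenthal edge: it is surjective, but its kernel is the subgroup generated by the Whitehead square $[\iota_{q-1},\iota_{q-1}]$, which is in general nonzero (for $q=3$ it is $2\mathbb{Z}\subset\pi_3(S^2)$). The vanishing of $\pi_{q-2}(S^{q-1})$ only shows that $\pi_{q-2}(SG_q)$ is a quotient of $\pi_{q-2}(\Omega^{q-1}_\ast S^{q-1})$; it does nothing to kill the kernel of the fiber map, so the four-lemma hypothesis needed for injectivity of $\pi_{q-2}(SG_q)\to\pi_{q-2}(SG_{q+1})$ fails as you have set things up. What saves the statement is that the connecting homomorphism $\partial\colon\pi_{q-1}(S^{q-1})\to\pi_{2q-3}(S^{q-1})$ of the evaluation fibration is $\alpha\mapsto[\iota_{q-1},\alpha]$ (the paper records this in the proof of the rational computation), so $\partial(\iota_{q-1})=[\iota_{q-1},\iota_{q-1}]$ and hence $\ker E\subseteq\operatorname{im}\partial$; then $\pi_{q-2}(SG_q)=\pi_{2q-3}(S^{q-1})/\operatorname{im}\partial$ maps isomorphically onto $\pi_{2q-2}(S^q)\cong\pi_{q-2}(SG_{q+1})$. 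You need this identification of $\partial$ with the Whitehead product together with the sharp form of Freudenthal describing the kernel at the edge; without it the case $i=q-2$ (for instance $\pi_1(SG_3)=\mathbb{Z}/2=\pi_1(SG)$) is not established.
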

\begin{proof}
When $i\leq q-1$, we have $\pi_i(SG,SG_q)=\pi_{i}(SO,SO_q)=0$, where we get the first equality from Proposition \ref{or}, and the second one using the fact that $\pi_{i}(SO,SO_q)=0$ for $i< q$. Therefore, from the long exact sequence (\ref{1}), we conclude that for $i\leq q-2$, $\pi_i(SG_q)=\pi_{i}(SG)$. 
\end{proof}

\Address


\begin{thebibliography}{9}

\bibitem{vt} 
G. Arone, V. Turchin, \textit{Graph complexes computing the rational homotopy of high dimensional analogues of spaces of long knots}, Ann. Inst. Fourier (Grenoble) 65 (2015), no. 1, 1-62.

\bibitem{vt1}
G. Arone, V. Turchin, \textit{On the rational homology of high-dimensional analogues of spaces of long knots}, Geom. Topol. 18 (2014), no. 3, 1261-1322.

\bibitem{bw}
P. Boavida de Brito, M. Weiss, \textit{Spaces of smooth embeddings and configuration categories}, J. Topol. 11 (2018), no. 1, 65-143. 


\bibitem{vt2}
B. Fresse, V. Turchin, T. Willwatcher, \textit{On the rational homotopy type of embedding spaces of manifolds in $\mathbb{R}^n$}, arXiv:2008.08146, (2020).

\bibitem{hae3}
A. Haefliger, \textit{Plongements diff\'erentiables de vari\'et\'es dans vari\'et\'es}, Comment. Math. Helv., 36 (1961), 47-82.

\bibitem{hae2}
A. Haefliger, \textit{Knotted $(4k-1)$-spheres in $6k$-space}, Ann. of Math., 75 (1962), 452-466.

\bibitem{hae} 
A. Haefliger, \textit{Differential embeddings of} $S^n$ \textit{in} $S^{n+q}$ \textit{for} $q > 2$, Ann. of Math. (2) 83 (1966) 402–436.

\bibitem{hae1}
A. Haefliger, \textit{ Lissage des immersions-I}, Topology 6 (1967), 221–239.

\bibitem{hir}
M. Hirsch, \textit{Immersions of manifolds}, Transactions A.M.S. 93 (1959), 242-276.

\bibitem{las1}
R. Lashof, \textit{Embedding spaces}, Illinois J. Math. 20 (1976), n0. 1, 144-154.
 
\bibitem{mil}
R.J. Milgram, \textit{On the Haefliger knot groups}, Bull. Amer. Math. Soc. 78(5)(1972) 861-865.

\bibitem{mill}
K. Millett, \textit{Piecewise linear embeddings of manifolds}, Illinois J. Math. 19 (1975), 354-369.

\bibitem{sak}
K. Sakai, \textit{Deloopings of the spaces of long embeddings}, Fund. Math. 227 (2014), no. 1, 27–34. 

\bibitem{skop}
A. Skopenkov, \textit{A classification of smooth embeddings of $4$-manifolds in $7$-space}, I. Topology Appl. 157 (2010), no. 13, 2094-2110.

\bibitem{sm}
S. Smale, \textit{Classification of immersions of spheres in Euclidean space}, Ann. of Math. 69 (1959), 327-344.

\bibitem{song}
P.-A. Songhafouo Tsopm\'en\'e, V. Turchin, \textit{Rational homology and homotopy of high dimensional string links}, Forum Mathematicum (2018), Vol. 30(5), 1209-1235, .

\bibitem{vt3}
V. Turchin, T. Willwacher, \textit{On the homotopy type of the spaces of spherical knots in $\mathbb{R}^n$}, M\"{u}nster J. Math. 14 (2021), 537–558.


\bibitem{max}
Manifold Atlas \url{http://www.map.mpim-bonn.mpg.de/Knots,_i.e._embeddings_of_spheres}. Edited by A. Skopenkov.





\end{thebibliography}
\end{document}